\documentclass{amsart}

\usepackage[latin2]{inputenc}
\usepackage{amsmath}
\usepackage{graphicx}
\usepackage{amssymb}
\usepackage{filecontents}
\usepackage{color}
\usepackage{esint}
\usepackage{epsfig}
\usepackage[english]{babel}

\newtheorem{theorem}{Theorem}
\newtheorem{proposition}[theorem]{Proposition}
\newtheorem{lemma}[theorem]{Lemma}

\newtheorem{definition}[theorem]{Definition}
\newtheorem{remark}[theorem]{Remark}

\def\Xint#1{\mathchoice
{\XXint\displaystyle\textstyle{#1}}%
{\XXint\textstyle\scriptstyle{#1}}%
{\XXint\scriptstyle\scriptscriptstyle{#1}}%
{\XXint\scriptscriptstyle\scriptscriptstyle{#1}}%
\!\int}
\def\XXint#1#2#3{{\setbox0=\hbox{$#1{#2#3}{\int}$ }
\vcenter{\hbox{$#2#3$ }}\kern-.6\wd0}}

\def\dashint{\Xint-}

\begin{document}

\allowdisplaybreaks[2]

\title[Entropy-Dissipating Reaction-Diffusion Equations]{Weak-Strong Uniqueness of Solutions to Entropy-Dissipating Reaction-Diffusion Equations}

\author{Julian Fischer}

\address{Institute of Science and Technology Austria (IST Austria), Am Campus 1, 3400 Klosterneuburg, Austria, E-Mail: julian.fischer@ist.ac.at}

\begin{abstract}
We establish a weak-strong uniqueness principle for solutions to entropy-dissipating reaction-diffusion equations: As long as a strong solution to the reaction-diffusion equation exists, any weak solution and even any renormalized solution must coincide with this strong solution. Our assumptions on the reaction rates are just the entropy condition and local Lipschitz continuity; in particular, we do not impose any growth restrictions on the reaction rates. Therefore, our result applies to any single reversible reaction with mass-action kinetics as well as to systems of reversible reactions with mass-action kinetics satisfying the detailed balance condition. Renormalized solutions are known to exist globally in time for reaction-diffusion equations with entropy-dissipating reaction rates; in contrast, the global-in-time existence of weak solutions is in general still an open problem -- even for smooth data -- , thereby motivating the study of renormalized solutions. The key ingredient of our result is a careful adjustment of the usual relative entropy functional, whose evolution cannot be controlled properly for weak solutions or renormalized solutions.
\end{abstract}

\maketitle

\section{Introduction}

Consider a general reversible chemical reaction of the form
\begin{align}
\label{MassActionEquation}
\alpha_1 \mathcal{A}_1+\ldots+\alpha_S \mathcal{A}_S
\rightleftharpoons
\beta_1 \mathcal{A}_1+\ldots+\beta_S \mathcal{A}_S,
\end{align}
where the $\mathcal{A}_i$ denote the different types of molecules and where $\alpha_i$, $\beta_i$ are nonnegative integers that denote the number of involved molecules of type $\mathcal{A}_i$.
An important model for the reaction kinetics of such reactions are \emph{mass action kinetics}: In mass action kinetics, the reaction rate is taken to be proportional to the probability that the involved reactants are simultaneously present in an infinitesimally small volume. Denoting the (nonnegative) concentration of the chemical $\mathcal{A}_i$ by $u_i$, in the example \eqref{MassActionEquation} the rate $R_f(u)$ of the forward reaction and the rate $R_b(u)$ of the backward reaction are therefore given by
\begin{align*}
R_f(u):= c_f\prod_{k=1}^S u_k^{\alpha_k},
\quad\quad
R_b(u):= c_b\prod_{k=1}^S u_k^{\beta_k},
\end{align*}
where $c_f,c_b>0$ are constants. The net rate of change of the concentration $u_i$ of the molecules of type $\mathcal{A}_i$ that is caused by the reaction is therefore
\begin{align}
\label{MassActionLaw}
R_i(u)=(\beta_i-\alpha_i)\left(c_f\prod_{k=1}^S u_k^{\alpha_k}
-c_b\prod_{k=1}^S u_k^{\beta_k}\right).
\end{align}
The mathematical analysis of reaction-diffusion equations with mass action kinetics poses interesting mathematical challenges: Even for the simple reaction-diffusion equation
\begin{align}
\label{SimpleReactDiff}
~~~~~~\frac{d}{dt} u_i = a_i\Delta u_i
+R_i(u) &&  \forall i\in \{1,\ldots,S\}
\end{align}
(with $a_i>0$ denoting species-dependent diffusion constants), until recently all proofs for the global existence of any kind of solution were limited to special cases \cite{BothePierre,CanizoDesvillettesFellner,CaputoVasseur,DesvillettesFellner,
DesvillettesEtAl,FiebachGlitzkyLinke,GajewskiGroeger,GajewskiSkrypnik,
GoudonVasseur,Kraeutle,MinchevaSiegel,PierreOverview}. Only recently, a general existence theory for reaction-diffusion equations with mass-action kinetics of the form
\begin{align}
\label{Equation}
\frac{d}{dt} u_i = \nabla \cdot (A_i \nabla u_i) - \nabla \cdot (u_i \vec b_i) +
R_i(u) &&  \forall i\in \{1,\ldots,S\}
\end{align}
(with general diffusion tensors $A_i$ and advection velocities $\vec b_i$ which may depend on space and time) has been developed by the author \cite{FischerReactDiffExistence}. The global solutions constructed in \cite{FischerReactDiffExistence} are so-called \emph{renormalized solutions}; whether weak solutions or even smooth solutions exist globally in time has remained an open problem.

The key difficulty in the proof of existence of solutions is the lack of control of the reaction terms. Although the global existence of smooth solutions is conjectured for simple reaction-diffusion equations like \eqref{SimpleReactDiff} with a single reversible reaction with mass-action kinetics \eqref{MassActionLaw}, there are no estimates available that would provide even just an $L^1$ bound for the reaction terms, even for smooth initial data: Besides the entropy dissipation estimate \eqref{EntropyDissipate} below, in general the only known bound is basically an $L^2(\Omega\times [0,T])$ estimate based on duality methods \cite{CanizoDesvillettesFellner,DesvillettesFellner,
DesvillettesEtAl,PierreSchmidt1,PierreSchmidt2}. Thus, for reaction rates with superquadratic growth there is not even a guarantee that the reaction terms $R_i(u)$ define a distribution, thereby obstructing any proof of (global-in-time) existence of weak solutions.
Note that if one had an $L^1$ a priori bound for the reaction terms, the construction of weak solutions would be possible \cite{PierreNew,PierreL1}.

The most important mathematical energy estimate for reaction-diffusion equations with mass-action kinetics -- and, as discussed above, also almost the only energy estimate available -- is the \emph{entropy estimate}, which is a consequence of the structure \eqref{MassActionLaw} of the reaction-rates: There exist real numbers $\mu_i$ for which the entropy functional
\begin{align}
\label{Entropy}
E[u]:=\int_\Omega \sum_{i=1}^S u_i (\log u_i + \mu_i -1) \,dx
\end{align}
is dissipated along (sufficiently regular) solutions to the reaction-diffusion equation \eqref{SimpleReactDiff}. More precisely, given for example no-flux boundary conditions on  $\partial\Omega$, one has the dissipation estimate
\begin{align}
\label{EntropyDissipate}
E[u](T) + \sum_{i=1}^S \int_0^T \int_\Omega 4 a_i |\nabla \sqrt{u_i}|^2 \,dx \,dt
\leq E[u_0].
\end{align}
In fact, the author's theorem of existence of renormalized solutions for reaction-diffusion equations of the form \eqref{Equation} is not restricted to reaction rates of mass-action kinetics type, but (besides local Lipschitz continuity of the reaction rates and the non-consumption of chemicals which are not present, see (A6) below) only requires the entropy condition
\begin{align}
\label{EntropyCondition}
\sum_{i=1}^S R_i(u) (\log u_i+\mu_i)\leq 0\quad\quad\text{for all }u\in (\mathbb{R}_0^+)^S
\end{align}
for some $\mu_i\in \mathbb{R}$, $1\leq i\leq S$. Note that the entropy condition for the reaction rates entails the entropy dissipation estimate \eqref{EntropyDissipate} for simple reaction-diffusion equations of the form \eqref{SimpleReactDiff} and a similar estimate in the more general case \eqref{Equation}.

The entropy dissipation property -- and hence, also the existence theory for renormalized solutions in \cite{FischerReactDiffExistence} and the results of the present paper -- is not restricted to the situation of a single reversible reaction with mass action kinetics, but holds as well for systems of $N_R$ reversible reactions of the form
\begin{align}
\label{SystemOfEquations}
\alpha_1^n \mathcal{A}_1+\ldots+\alpha_S^n \mathcal{A}_S
\rightleftharpoons
\beta_1^n \mathcal{A}_1+\ldots+\beta_S^n \mathcal{A}_S,
~~~~ 1\leq n\leq N_R,
\end{align}
with corresponding mass-action kinetics
\begin{align}
\label{MassActionSystem}
R_i(u):= \sum_{n=1}^{N_R} (\beta_i^n-\alpha_i^n) \left(c_{f}^n\prod_{j=1}^S u_j^{\alpha_j^n}-c_{b}^n\prod_{j=1}^S u_j^{\beta_j^n}\right),
\end{align}
given that the so-called condition of detailed balance is satisfied. In particular, the entropy dissipation property holds for $N_R\leq S$ reversible reactions with mass-action kinetics provided that the matrix $(\beta_i^n-\alpha_i^n)_{in}$ has full rank (see e.\,g.\ \cite{GlitzkyMielke,SchusterSchuster}). See \cite{FeinbergHorn} for a mathematical analysis of such systems of reactions with mass-action kinetics.

It is also worth mentioning that reaction-diffusion equations with mass-action kinetics do not only dissipate the entropy \eqref{Entropy}, but may (formally and sometimes rigorously) be regarded as gradient flows of the entropy functional \cite{LieroMielke,LieroMielkeSavare,MielkeGradientStructure}.

While the entropy structure prevents global-in-space blowup of solutions, it does not provide pointwise control of solutions: Indeed, for reaction rates satisfying a condition that is closely related to the entropy condition -- namely the condition of dissipation of mass -- solutions featuring blowup in the $L^\infty$ norm have been constructed by Pierre and Schmitt \cite{PierreSchmidt1,PierreSchmidt2}. An overview of existence results for reaction-diffusion equations with dissipation of mass or dissipation of entropy may be found in the survey by Pierre \cite{PierreOverview}.

The author's paper \cite{FischerReactDiffExistence} provides an answer to the question of global existence of solutions to entropy-dissipating reaction-diffusion equations of the form \eqref{Equation}; however, it does not address the question of uniqueness.
In the present work, we provide a partial answer to the question of uniqueness of solutions: In Theorem~\ref{Theorem}, we prove that the \emph{existence} of a \emph{strong} solution to an entropy-dissipating reaction-diffusion equation on a certain time interval entails that this strong solution is also the \emph{unique} renormalized solution, as long as it exists. In the literature, results of this type are typically being referred to as \emph{weak-strong uniqueness} theorems.

Before sketching the strategy for the derivation of the weak-strong uniqueness result, let us briefly comment on the concept of \emph{renormalized solutions}. Renormalized solutions have originally been introduced by DiPerna and Lions in a series of seminal works \cite{DiPernaLions1,DiPernaLions,DiPernaLions2} in the setting of the continuity equation
\begin{align}
\label{ContinuityEquation}
\frac{d}{dt} u = -\nabla \cdot (u \vec{b})
\end{align}
and in the setting of the Boltzmann equation; since then, have found numerous applications in the theory of PDEs, see e.\,g.\ 
\cite{Alexandre,AlexandreVillani,DalMasoEtAl,Murat,Villani} and the references therein. To motivate the definition of renormalized solutions, consider the continuity equation for a vector field $\vec{b}\in L^1(\Omega)$ with $\nabla \cdot \vec{b} \in L^1(\Omega)$ and initial data $u_0\in L^1(\Omega)$. It becomes apparent that in this setting one can in general not give a meaning to the term $\nabla \cdot (u\vec{b})$ in the weak formulation of the continuity equation: The product $u\vec{b}$ is a product of $L^1$ functions (as the continuity equation in general has no regularizing effect), which in general does not even define a distribution. This motivates the introduction of a more general concept of solutions than weak solutions, namely renormalized solutions. A renormalized solution $u$ to the continuity equation is defined by the requirement that for all smooth functions $\xi:\mathbb{R}\rightarrow\mathbb{R}$ with compactly supported derivative $\xi'$, the function $\xi(u)$ must satisfy the equation derived from \eqref{ContinuityEquation} by a formal application of the chain rule: In other words, for all such $\xi$ the function $u$ must satisfy
\begin{align*}
\frac{d}{dt} \xi(u) = - \nabla \cdot (\xi(u) \vec{b}) + (\xi(u)-u\xi'(u)) \nabla \cdot \vec{b}
\end{align*}
in a weak sense, which is an equation that can be given a meaning in the sense of distributions in the setting $\vec{b}\in L^1(\Omega)$, $\nabla \cdot \vec{b} \in L^1(\Omega)$, $u_0\in L^1(\Omega)$.

Correspondingly, in the author's recent work \cite{FischerReactDiffExistence} renormalized solutions to the reaction-diffusion-advection equation \eqref{Equation} are defined by the condition that for all functions $\xi:(\mathbb{R}_0^+)^S\rightarrow \mathbb{R}$ with compactly supported derivative $D \xi$, the function $\xi(u)$ must satisfy the equation derived from \eqref{Equation} by a formal application of the chain rule in a weak sense; see Definition~\ref{DefinitionSolution} below for details.

Let us now briefly explain the mathematical concept that is central to our derivation of the weak-strong uniqueness theorem, the concept of so-called relative entropies.
As one easily checks by differentiation, the entropy \eqref{Entropy} is a strictly convex functional of $u$. The \emph{relative entropy} $E[u|v]$ is a mathematical concept to measure the ``distance'' of $u$ to some reference data $v$. It is obtained by subtracting an affine functional in $u$ from the convex entropy functional $E[u]$ in such a way that the resulting functional is nonnegative and has its unique zero for $u=v$: By definition, one has
\begin{align*}
E[u|v]&:=E[u]-DE[v](u-v) -E[v]
\\&
=\int_\Omega \sum_{i=1}^S u_i (\log u_i + \mu_i -1) \,dx
- \int_\Omega \sum_{i=1}^S  (u_i-v_i) (\log v_i + \mu_i) \,dx
\\&~~~
- \int_\Omega \sum_{i=1}^S v_i (\log v_i + \mu_i -1) \,dx
\\&
=\int_\Omega \sum_{i=1}^S \Big( u_i (\log u_i+\mu_i -1) - u_i (\log v_i+\mu_i) + v_i \Big) \,dx.
\end{align*}
Note that unlike the entropy $E[u]$ (which is dissipated for solutions of \eqref{SimpleReactDiff} with Neumann boundary data), the relative entropy $E[u|v]$ is in general not a nonincreasing function of time.

The advantage of the concept of relative entropies -- as opposed to other methods of measuring the ``distance'' of a solution $u$ to some reference data $v$, like $L^p$ norms or Sobolev norms -- is that relative entropies are often better adapted to the equation in consideration. For example, to evaluate the time derivative of the relative entropy $\frac{d}{dt} E[u|v]$, by the definition $E[u|v]:=E[u]-DE[v](u-v)-E[v]$ one basically just needs to use the entropy dissipation property of $u$ to estimate $\frac{d}{dt}E[u]$ and to test the weak formulation of the equation for $u$ with the test function $DE[v]$ -- that is in our setting, to test the equation for $u_i$ with $\log v_i+\mu_i$ and take the sum in $i$. In fact, numerous weak-strong uniqueness results for partial differential equations rely on relative entropies, for example the weak-strong uniqueness results for the compressible Navier-Stokes equation and related systems \cite{FeireislWeakStrong,FeireislNovotnyWeakStrongNavierStokesFourier}.

However, a direct application of the relative entropy method does not provide a weak-strong uniqueness result for entropy-dissipating reaction-diffusion equations without substantial additional ideas, even when assuming arbitrary smoothness and positivity properties of the strong solution $v$: By a formal computation, we have for two solutions $u$ and $v$ of our equation \eqref{SimpleReactDiff} with no-flux boundary conditions
\begin{align}
\label{FormalUsualRelativeEntropy}
\frac{d}{dt} E[u|v]
=&-\int_\Omega \sum_{i=1}^S a_i u_i \bigg|\frac{\nabla u_i}{u_i}-\frac{\nabla v_i}{v_i}\bigg|^2 \,dx
\\&
\nonumber
+\int_\Omega \sum_{i=1}^S R_i(u) \Big(\log \frac{u_i}{v_i}+\mu_i-\mu_i\Big)
-\sum_{i=1}^S R_i(v) \left(\frac{u_i}{v_i}-1\right) \,dx.
\end{align}
Typically, one would now try to estimate the right-hand side from above in terms of the relative entropy and use a Gronwall-type argument to conclude that for a renormalized or weak solution $u$ and a strong solution $v$ to the equation \eqref{SimpleReactDiff} with the same initial data, one has $E[u|v]=0$ for all $T\geq 0$ and therefore $u=v$ almost everywhere.

The key obstacle to proving weak-strong uniqueness using the relative entropy $E[u|v]$ is the insufficient control of the term
\begin{align}
\label{CriticalTerm}
\int_\Omega \sum_{i=1}^S R_i(u) \Big(\log \frac{1}{v_i}-\mu_i\Big) \,dx
\end{align}
whenever $u$ is just a weak solution or a renormalized solution to the reaction-diffusion equation \eqref{SimpleReactDiff}, even when $v_i$ is assumed to be smooth and strictly positive and even if we are in the case of a single reaction with mass-action kinetics \eqref{MassActionLaw}. Any attempt of controlling the term \eqref{CriticalTerm} in terms of the nonpositive (see \eqref{EntropyCondition}) dissipation terms from \eqref{FormalUsualRelativeEntropy}
\begin{align*}
-\int_\Omega \sum_{i=1}^S a_i u_i \bigg|\frac{\nabla u_i}{u_i}-\frac{\nabla v_i}{v_i}\bigg|^2 \,dx+\int_\Omega \sum_{i=1}^S R_i(u) (\log u_i+\mu_i) \,dx
\end{align*}
or the relative entropy fails in general: For example, for reaction terms of the form \eqref{MassActionLaw} (for simplicity of the outline, let us take $c_f=c_b=1$ and $\mu_i=0$) the second dissipation term may be rewritten as
\begin{align*}
\int_\Omega \sum_{i=1}^S R_i(u)(\log u_i+\mu_i) \,dx = -\int_\Omega (R_f(u)-R_b(u)) \log \frac{R_f(u)}{R_b(u)} \,dx,
\end{align*}
which in general does not provide any control of the net reaction rate $R_i(u)=(\beta_i-\alpha_i)(R_f(u)-R_b(u))$. In fact, in case $\frac{R_f(u)}{R_b(u)}\approx 1$ elementary calculus shows that the dissipation behaves like
\begin{align*}
-\frac{|R_f(u)-R_b(u)|^2}{R_b(u)}.
\end{align*}
Therefore in such a case one cannot rule out that the term
\begin{align*}
\int_\Omega \sum_{i=1}^S R_i(u)\Big(\log \frac{1}{v_i}-\mu_i\Big) \,dx
\end{align*}
might strongly dominate the dissipation
\begin{align*}
\int_\Omega \sum_{i=1}^S R_i(u)(\log u_i+\mu_i) \,dx
\end{align*}
and lead to $E[u|v]$ becoming strictly positive. Note that the last term in \eqref{FormalUsualRelativeEntropy} $-\int_\Omega \sum_{i=1}^S R_i(v) \left(\frac{u_i}{v_i}-1\right) \,dx$  is better-behaved than the term \eqref{CriticalTerm}, thus there is no hope for cancellations. Due to the strong polynomial growth of $R_i(u)$ in $u$, the relative entropy $E[u|v]$ itself does not provide any control of the term \eqref{CriticalTerm} either.

The key idea of our result is to instead consider the \emph{adjusted} relative entropy functional
\begin{align}
E_M[u|v]:= \int_\Omega \sum_{i=1}^S \Big( u_i (\log u_i+\mu_i-1) - \xi_M(u) u_i (\log v_i+\mu_i) + v_i \Big) \,dx,
\end{align}
where $\xi_M:\mathbb{R}^S \rightarrow [0,1]$ is a cutoff with $\xi_M(w)=1$ for $\sum_{i=1}^S w_i \leq M$ and $\xi_M(w)=0$ for $\sum_{i=1}^S w_i \geq M^K$ as well as $|\partial_j \xi_M(w)|\leq \frac{C}{K \sum_{i=1}^S w_i}$, $|\partial_j\partial_k \xi_M(w)|\leq \frac{C}{K |\sum_{i=1}^S w_i|^2}$. Here, the cutoff concentration $M$ is chosen fixed but much larger than the maximum of the strong solution $v$; the constant $K\geq 2$ is chosen fixed but large enough depending on the strong solution $v$. For this adjusted entropy functional, formal computations show that for sufficiently regular and strictly positive strong solutions $v$, a Gronwall-type argument is now applicable and yields weak-strong uniqueness: The time derivative of the adjusted relative entropy is given by
\begin{align}
\label{FormalRelativeEntropy}
&\frac{d}{dt} E_M[u|v]
\\&
\nonumber
=\sum_{i=1}^S \int_\Omega - a_i u_i \bigg|\frac{\nabla u_i}{u_i}\bigg|^2 - a_i u_i \xi_M(u) \bigg|\frac{\nabla v_i}{v_i}\bigg|^2 + 2 a_i u_i \xi_M(u) \frac{\nabla u_i}{u_i}\cdot \frac{\nabla v_i}{v_i} \,dx
\\&~~~
\nonumber
+\sum_{i,k=1}^S \int_\Omega a_k \nabla u_k \cdot \nabla (\partial_k \xi_M(u) u_i (\log v_i+\mu_i)) \,dx
\\&~~~
\nonumber
+\sum_{i=1}^S \int_\Omega a_i \nabla u_i \cdot (\log v_i+\mu_i) \nabla (\xi_M(u))
+a_i \frac{u_i}{v_i} \nabla v_i \cdot \nabla (\xi_M(u))\,dx
\\&~~~
\nonumber
+ \sum_{i=1}^S \int_\Omega R_i(u)\big(\log u_i+\mu_i-\xi_M(u) (\log v_i+\mu_i)\big) - R_i(v)\Big(\xi_M(u)\frac{u_i}{v_i}-1 \Big) \,dx
\\&~~~
\nonumber
-\sum_{i,k=1}^S \int_\Omega \partial_k \xi_M(u) u_i (\log v_i+\mu_i) \, R_k(u) \,dx.
\end{align}
Now, to apply Gronwall, one pulls all integrals together and argues in a pointwise fashion: For $\sum_{i=1}^S u_i\leq M$, one has $\xi_M(u)=1$ and therefore one obtains the simplified integrand
\begin{align}
\label{DissipationEstimateSmallerthanM}
-\sum_{i=1}^S a_i u_i \left|\frac{\nabla u_i}{u_i}-\frac{\nabla v_i}{v_i}\right|^2
+\sum_{i=1}^S \left(R_i(u) \log \frac{u_i}{v_i}-R_i(v) \left(\frac{u_i}{v_i}-1\right)\right).
\end{align}
Note that due to $\sum_{i=1}^S u_i\leq M$ and $\sum_{i=1}^S v_i\leq M$, for the second sum an upper bound of the form $C(\sup_{x,t,i} v_i,\inf_{x,t,i} v_i,M,R_i) \sum_{i=1}^S |u_i-v_i|^2$ is available (where by $C(\ldots,R_i)$ we mean that the constant depends on the functions $R_i$). By uniform convexity of $w \log w$ on bounded sets, we have (and shall prove rigorously in Lemma~\ref{Lemma7}) an estimate of the form
\begin{align*}
\int_\Omega |u_i-v_i|^2 \chi_{\{\sum_{i=1}^S u_i\leq M\}} \,dx \leq C(M) E_M[u|v].
\end{align*}
The integral of the terms \eqref{DissipationEstimateSmallerthanM} over the set $\{\sum_{i=1}^S u_i\leq M\}$ is therefore bounded by a Gronwall-type term $C(\inf_{x,t,i} v_i,M,R_i) E_M[u|v]$.

In the case $\sum_{i=1}^S u_i\geq M^K$, one has $\xi_M(u)=0$. Thus, in this case the integrand becomes
\begin{align*}
-\sum_{i=1}^S a_i u_i \bigg|\frac{\nabla u_i}{u_i}\bigg|^2
+\sum_{i=1}^S R_i(u) (\log u_i+\mu_i) + \sum_{i=1}^S R_i(v),
\end{align*}
which (since in view of the entropy condition \eqref{EntropyCondition} only the last of the three sums might be nonnegative) for $M$ large enough is clearly bounded from above by $C(||v||_{L^\infty}) \sum_{i=1}^S \big(u_i(\log u_i+\mu_i-1)-\xi_M(u)u_i (\log v_i+\mu_i)+v_i\big)$.

The remaining case $M<\sum_{i=1}^S u_i<M^K$ is the crucial case. For $M$ and $K$ large enough (depending on $\sup_{i,x,t} v_i$) but fixed, the adjusted relative entropy density $\sum_{i=1}^S (u_i (\log u_i+\mu_i-1)-\xi_M(u)u_i(\log v_i+\mu_i)+v_i)$ is bounded from below by $1$ in this case (note that in particular $u=v$ is excluded by the choice of $M$; for details of this lower bound, see Lemma~\ref{Lemma7}).
Furthermore, all reaction terms are bounded by a constant $C(M,K,R_i,\inf_{x,t,i} v_i)$. Therefore, the integral of all reaction terms in the case $M<\sum_{i=1}^S u_i<M^K$ may be bounded by a Gronwall term $C(M,K,R_i,\inf_{x,t,i} v_i) E_M[u|v]$. It remains to consider the diffusion terms. Applying the chain rule, the terms in the first and the second line of the right-hand side of \eqref{FormalRelativeEntropy} read (we omit the discussion of the terms from the third line of the right-hand side, as they may be treated analogously) 
\begin{align*}
&-\sum_{i=1}^S a_i u_i \left|\frac{\nabla u_i}{u_i}\right|^2
-\sum_{i=1}^S a_i u_i \xi_M(u) \left|\frac{\nabla v_i}{v_i}\right|^2
+\sum_{i=1}^S 2a_i u_i \xi_M(u) \frac{\nabla u_i}{u_i}\cdot \frac{\nabla v_i}{v_i}
\\&
+\sum_{i,k=1}^S a_k u_k \frac{\nabla u_k}{u_k} \cdot \partial_k \xi_M(u) u_i (\log v_i+\mu_i) \frac{\nabla u_i}{u_i}
+\sum_{i,k=1}^S a_k u_k \frac{\nabla u_k}{u_k} \cdot \partial_k \xi_M(u) u_i \frac{\nabla v_i}{v_i}
\\&
+\sum_{i,k,l=1}^S a_k u_k \frac{\nabla u_k}{u_k} \cdot \partial_k \partial_l \xi_M(u) u_i u_l (\log v_i+\mu_i) \frac{\nabla u_l}{u_l}.
\end{align*}
By the lower bound on the entropy density in the case $M<\sum_{i=1}^S u_i<M^K$, one may estimate the integral of terms of the form $\sum_{i=1}^S u_i \big|\frac{\nabla v_i}{v_i}\big|^2$ (and similar terms) by a Gronwall term $C E_M[u|v]$ with a constant $C=C(M,K,\inf_{x,t,i}v_i,\sup_{x,t,i}|\nabla v_i|)$. It is therefore possible to estimate the last term in the first line and the last term in the second line by Young's inequality, absorbing the terms involving $u_i |\nabla u_i/u_i|^2$ in the first term in the first line and generating a remaining Gronwall term of the form $C(M,K,\inf_{x,t,i}v_i,\sup_{x,t,i}|\nabla v_i|) E_M[u|v]$. To estimate the remaining terms, namely the first term in the second line and the term in the third line, one makes the crucial observation that for $K$ large enough, these terms may be absorbed in the first term in the first line: The estimates $|\partial_k \xi_M(w)|\leq \frac{C}{K\sum_{i=1}^S w_i}$ and $|\partial_k \partial_l \xi_M(w)|\leq \frac{C}{K(\sum_{i=1}^S w_i)^2}$ facilitate this absorption, provided that $K$ is chosen large enough (depending on $\sup_{i,x,t} |\log v_i|$ and the data).

In conclusion, for $M$ and $K$ large enough we obtain an estimate of the form
\begin{align*}
\frac{d}{dt} E_M[u|v] \leq C\Big(M,K,R_i,\sup_{i,x,t}v_i(x,t),\inf_{i,x,t}v_i(x,t),\sup_{i,x,t}|\nabla v_i|\Big) E_M[u|v].
\end{align*}
By the Gronwall lemma, this estimate entails $E_M[u|v](T)=0$ for all $T$ if the initial data of $u$ and $v$ coincide (which implies $E_M[u|v](0)=0$).

{\bf Notation.}
We shall use the abbreviation $I:=[0,\infty)$ for the time interval $[0,\infty)$. By $L^p_{loc}(I;X)$ we denote the set of functions whose restrictions to $[0,T]$ belong to $L^p([0,T];X)$ for any $T<\infty$ ($X$ being an arbitrary Banach space). For a domain $\Omega\subset\mathbb{R}^d$, we denote by $H^1(\Omega)$ the space of functions $u\in L^2(\Omega)$ whose distributional derivative $\nabla u$ is square-integrable; on $H^1(\Omega)$ we use the standard norm $||u||_{H^1(\Omega)}^2 := \int_\Omega |u|^2 + |\nabla u|^2 \,dx$. As usual, by $\chi_{A}$ we denote the characteristic function of a set $A$. For a set $A$, $\partial A$ refers to its boundary. The space of smooth compactly supported functions on a domain $\Omega\subset\mathbb{R}^d$ is denoted by $C^\infty_{cpt}(\Omega)$; by $C^\infty(\overline{\Omega})$ we denote the set of functions on $\Omega$ that admit a smooth extension to $\mathbb{R}^d$. By $\vec n$ we denote the outer unit normal vector to a domain $\Omega\subset \mathbb{R}^d$. We denote the $(d-1)$-dimensional Hausdorff measure by $\mathcal{H}^{d-1}$.

\section{Main Results}

As in \cite{FischerReactDiffExistence}, our key assumption on the reaction rates is the condition of entropy dissipation (\ref{EntropyCondition}), i.\,e.\ the existence of $\mu_i\in \mathbb{R}$ for which the dissipation
\begin{align*}
\sum_{i=1}^S R_i(u) (\log u_i+\mu_i)\leq 0\quad\quad\text{for all }u\in (\mathbb{R}_0^+)^S
\end{align*}
holds.

Besides the entropy dissipation condition, we again impose the following (modest) conditions on our domain, the coefficients, and the reaction rates.
\begin{itemize}
  \item[(A1)] Let $d\in \mathbb{N}$ and let $\Omega \subset \mathbb{R}^d$ be a
  bounded Lipschitz domain.
  \item[(A2)] Assume that $A_i\in
  \left[L^\infty(I;L^\infty(\Omega))\right]^{d\times d}$.
  \item[(A3)] Suppose that there exists $\lambda>0$ such that for all $i$, 
  all $x\in \Omega$, all $t\in I$, and all $v\in \mathbb{R}^d$ we have
  $A_i(x,t) v \cdot v\geq  \lambda |v|^2$.
  \item[(A4)] Assume that $\vec b_i\in \left[L^\infty(I;
  L^\infty(\Omega))\right]^d$ and that the trace of $\vec b_i$ on the spatial
  boundary $\partial \Omega\times I$ exists.
  \item[(A5)] Let $R_i:\left(\mathbb{R}_0^+\right)^{S}\rightarrow \mathbb{R}$
  be locally Lipschitz continuous for all $1\leq i\leq S$, that is Lipschitz continuous on every bounded subset of $(\mathbb{R}_0^+)^S$.
  \item[(A6)] Assume that $R_i(v)\geq 0$ holds for any $v\in (\mathbb{R}_0^+)^S$ with
  $v_i=0$.
\end{itemize}
Note that the condition (A6) guarantees that a chemical which is not present cannot be consumed by reactions, thereby ensuring nonnegativity of all concentrations $u_i$. By $R$ we shall denote the vector-valued function whose $S$ components are given by $R_1,\ldots,R_S$.

We impose the same boundary conditions as in \cite{FischerReactDiffExistence}:
\begin{itemize}
  \item[(B1)] Let $\Gamma_{In}$, $\Gamma_{Out}$ be disjoint open subsets of $\partial\Omega$ with $\overline{\Gamma}_{In}\cup\overline{\Gamma}_{Out}=\partial\Omega$.
  \item[(B2)] Let $g_i\in L^\infty(\Gamma_{In}\times I)$, $1\leq i\leq S$, be bounded nonnegative functions. On $\Gamma_{In}\times I$, we impose the boundary condition $\vec n \cdot(A_i \nabla u_i- u_i \vec b_i)=g_i$.
  \item[(B3)] Suppose that on $\Gamma_{Out}$ we have $\vec n\cdot \vec b_i\geq 0$.
  On $\Gamma_{Out}\times I$ we then impose the boundary condition $\vec n\cdot A_i
  \nabla u_i=0$.
\end{itemize}
Condition (B2) prescribes the (in-)flow through the boundary $\Gamma_{In}$. Condition (B3) requires the diffusive flux at the boundary to vanish and is a typical outflow boundary condition on $\Gamma_{Out}$.

Note that one could in principle also work with other classes of boundary conditions, as long as they admit a suitable entropy estimate: For example, it would be possible to establish the existence theory of \cite{FischerReactDiffExistence} (and also the results in the present paper) for boundary reactions -- that is, for the boundary condition $\vec{n}\cdot (A_i\nabla u_i-u_i \vec{b}_i)=R^{\partial\Omega}_i(u)$ -- as long as the boundary reaction rates $R^{\partial\Omega}_i(\cdot)$ are locally Lipschitz continuous, satisfy (A6), and are subject to the entropy condition $\sum_{i=1}^S R^{\partial\Omega}_i(w) (\log w_i+\mu_i)\leq 0$ for all $w\in (\mathbb{R}_0^+)^S$ for the same $\mu_i$ as in \eqref{EntropyCondition}.

\begin{definition}[Weak solutions]
\label{DefinitionWeakSolution}
Suppose that (A1)-(A6) hold.
Let $(u_0)_i\in L^1(\Omega)$, $1\leq i\leq S$, be nonnegative. Let $0<T_{max}\leq \infty$. We say that nonnegative functions $u_i\in L^\infty_{loc}([0,T_{max});L^1(\Omega))$ with $\sqrt{u_i}\in L^2_{loc}([0,T_{max});H^1(\Omega))$, $1\leq i\leq S$, are a weak solution to the reaction-diffusion-advection equation (\ref{Equation}) with initial data $u_0$ and boundary conditions (B1)-(B3) provided that we have
\begin{align*}
R_i(u)\in L^1(\Omega\times [0,T])
\end{align*}
for all $1\leq i\leq S$ and any $T<T_{max}$ and that for any $\psi\in C^\infty(\overline{\Omega}\times I)$ and any $1\leq i\leq S$ the equation
\begin{align}
\nonumber
&\int_\Omega u_i(\cdot,T) \psi(\cdot,T) ~dx
-\int_\Omega (u_0)_i \psi(\cdot,0) ~dx
-\int_0^T \int_\Omega u_i \frac{d}{dt}\psi ~dx ~dt
\\
\nonumber
=&-\int_0^T \int_\Omega (A_i\nabla u_i) \cdot \nabla \psi ~dx ~dt
\\&
\label{WeakFormulation}
+\int_0^T \int_\Omega u_i \vec b_i \cdot \nabla \psi ~dx ~dt
\\&
\nonumber
+\int_0^T \int_\Omega R_i(u) \psi ~dx~dt
\\&
\nonumber
+\int_0^T \int_{\Gamma_{In}} g_i \psi ~d\mathcal{H}^{d-1} ~dt
\\&
\nonumber
-\int_0^T \int_{\Gamma_{Out}} \vec n \cdot \vec b_i ~u_i ~\psi ~d\mathcal{H}^{d-1} ~dt
\end{align}
is satisfied for almost every $T<T_{max}$.
\end{definition}

Recall the following definition of renormalized solutions to the reaction-diffusion-advection equation \eqref{Equation} introduced in \cite{FischerReactDiffExistence}.
\begin{definition}[Renormalized solutions]
\label{DefinitionSolution}
Suppose that (A1)-(A6) hold.
Let $(u_0)_i\in L^1(\Omega)$, $1\leq i\leq S$, be nonnegative.
We say that nonnegative
functions $u_i\in L^\infty_{loc}(I;L^1(\Omega))$ with $\sqrt{u_i}\in
L^2_{loc}(I;H^1(\Omega))$, $1\leq i\leq S$, are a renormalized solution to the
reaction-diffusion-advection equation (\ref{Equation}) with initial data $u_0$
and boundary conditions (B1)-(B3) if for every smooth function
$\xi:(\mathbb{R}_0^+)^S\rightarrow \mathbb{R}$ with compactly supported
derivative $D\xi$ and for every $\psi\in C^\infty(\overline{\Omega}\times I)$
the equation
\begin{align}
\nonumber
&\int_\Omega \xi(u(\cdot,T)) \psi(\cdot,T) ~dx
-\int_\Omega \xi(u_0) \psi(\cdot,0) ~dx
-\int_0^T \int_\Omega \xi(u) \frac{d}{dt}\psi ~dx ~dt
\\
\nonumber
=
&-\sum_{i,j=1}^S \int_0^T \int_\Omega \psi \partial_i\partial_j \xi(u)
(A_i\nabla u_i) \cdot \nabla u_j ~dx ~dt
\\
\label{RenormalizedFormulation}
&-\sum_{i=1}^S \int_0^T \int_\Omega \partial_i \xi(u)
(A_i\nabla u_i) \cdot \nabla \psi ~dx ~dt
\\&
\nonumber
+\sum_{i,j=1}^S \int_0^T \int_\Omega \psi \partial_i\partial_j\xi(u)
u_i \vec b_i \cdot \nabla u_j ~dx ~dt
\\&\nonumber
+\sum_{i=1}^S \int_0^T \int_\Omega \partial_i \xi(u) u_i \vec b_i \cdot
\nabla \psi ~dx ~dt
\\&
\nonumber
+\sum_{i=1}^S \int_0^T \int_\Omega \partial_i \xi(u) R_i(u) \psi ~dx~dt
\\&
\nonumber
+\sum_{i=1}^S \int_0^T \int_{\Gamma_{In}} g_i \psi \partial_i \xi(u)
~d\mathcal{H}^{d-1} ~dt
\\&
\nonumber
-\sum_{i=1}^S \int_0^T \int_{\Gamma_{Out}} \vec n \cdot \vec b_i ~u_i ~\psi
\partial_i \xi(u) ~d\mathcal{H}^{d-1} ~dt
\end{align}
is satisfied for almost every $T>0$.
\end{definition}
The crucial property of the definition of renormalized solutions is that it makes sense even if (integrable) singularities in the functions $u$ cause the reaction rates $R_i(u)$ to become nonintegrable, i.\,e.\ even for $R_i(u)\notin L^1(\Omega \times [0,T])$, a case that cannot be handled by any concept of weak solutions: Note that we have not imposed any growth restrictions on the reaction rates $R_i$; therefore, in principle even a very mild failure of $v\in L^\infty(\Omega \times [0,T])$ might cause $R_i(v)$ to become nonintegrable, at least for generic functions $v$. Whether there exist  reaction rates $R_i$ subject to our assumptions and renormalized solutions $u$ for which $R_i(u)$ is actually nonintegrable is an open question; however, at the moment, there is no technique available that would exclude the occurrence of such a situation either, even for the case of a single reaction with mass-action kinetics \eqref{MassActionLaw}.

Recall that according to the main result of \cite{FischerReactDiffExistence}, renormalized solutions to entropy-dissipating reaction-diffusion-advection equations exist globally in time under quite general assumptions on the data:
\begin{theorem}[\cite{FischerReactDiffExistence}, Theorem 1]
\label{WeakExistence}
Assume that conditions (A1)-(A6) and (B1) are satisfied; suppose that $g_i$
and $\vec b_i$ meet the conditions in (B2) and (B3). Assume that the reaction rates satisfy the entropy inequality \eqref{EntropyCondition}.
Let $(u_0)_i\in L^1(\Omega)$ be nonnegative functions with
$\sum_{i=1}^S \int_\Omega (u_0)_i \log (u_0)_i ~dx<\infty$.

Then there exists a global (in time) renormalized solution $u$ to
equation (\ref{Equation}) with initial data $u_0$ and boundary conditions (B1)-(B3) in the sense of Definition~\ref{DefinitionSolution}; the solution has the additional regularity $u_i \log u_i \in
L^\infty_{loc}(I;L^1(\Omega))$.
\end{theorem}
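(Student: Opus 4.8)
The plan is to obtain the renormalized solution as a limit of weak solutions of approximate problems with \emph{bounded} reaction rates. First I would replace $R_i$ by the truncated rates $R_i^k(w):=R_i(w)\big/\big(1+\tfrac1k\sum_{j=1}^S|R_j(w)|\big)$, which are again locally Lipschitz, are bounded by $k$, and still satisfy (A6) and the entropy condition \eqref{EntropyCondition} (the correction factor being positive). For bounded, locally Lipschitz reaction rates, global nonnegative weak solutions $u^k$ to \eqref{Equation} with $R_i^k$ in place of $R_i$ exist by a standard Galerkin/fixed-point scheme (mollifying $u_0$, and using $\log(u_i^k+\delta)+\mu_i$ as a test function to circumvent the singularity of the logarithm); since $R_i^k(u^k)\in L^\infty$ they are in particular renormalized solutions, and they obey the entropy balance obtained by testing the $i$-th equation with $\log u_i^k+\mu_i$ and summing over $i$. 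Dropping the nonpositive reaction contribution, controlling the inflow term $\sum_i\int_0^T\int_{\Gamma_{In}}g_i(\log u_i^k+\mu_i)\,d\mathcal{H}^{d-1}\,dt$ by the trace inequality $\|\sqrt{u_i^k}\|_{L^2(\partial\Omega)}^2\le\varepsilon\|\nabla\sqrt{u_i^k}\|_{L^2(\Omega)}^2+C_\varepsilon\|u_i^k\|_{L^1(\Omega)}$ and absorbing the gradient term into the dissipation, and invoking $\sum_i\int_\Omega(u_0)_i\log(u_0)_i\,dx<\infty$, a Gronwall argument yields
\[
\sup_{t\le T}\sum_{i=1}^S\int_\Omega u_i^k(\log u_i^k+\mu_i)\,dx+\sum_{i=1}^S\int_0^T\int_\Omega|\nabla\sqrt{u_i^k}|^2\,dx\,dt\le C(T)
\]
uniformly in $k$; hence $u_i^k$ is bounded in $L^\infty_{loc}(I;L^1(\Omega))$, $\sqrt{u_i^k}$ in $L^2_{loc}(I;H^1(\Omega))$ (so $u_i^k$ in $L^{(d+2)/d}_{loc}$ by the Sobolev embedding and interpolation), and $\{u_i^k\}$ is equi-integrable on $\Omega\times[0,T]$.

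The core difficulty is the absence of any $k$-uniform bound on the reaction terms $R_i^k(u^k)$ --- precisely why renormalized rather than weak solutions must be sought --- so that $\partial_t u_i^k$ is not controlled directly. To recover compactness I would apply the renormalized formulation of $u^k$ with test functions $\theta_m$ that are smooth, equal to the identity on $[0,m]$ and constant on $[m+2,\infty)$: every term in the resulting equation for $\theta_m(u_i^k)$ is $k$-uniformly bounded, since $\theta_m'$ and $\theta_m''$ vanish outside $[0,m+2]$ (so $\theta_m'(u_i^k)\sqrt{u_i^k}$ and $\theta_m''(u_i^k)u_i^k$ are bounded) and $R_i^k(u^k)$ is bounded by $\sup_{|w|\le m+2}|R_i(w)|$ on $\{u_i^k\le m+2\}$. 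Thus $\theta_m(u_i^k)$ is bounded in $L^2_{loc}(I;H^1(\Omega))$ with $\partial_t\theta_m(u_i^k)$ bounded in $L^1_{loc}(I;W^{-1,1}(\Omega))$, and the Aubin--Lions--Simon lemma makes it precompact in $L^2_{loc}(I;L^2(\Omega))$. A diagonal extraction over $i$ and $m$ produces a subsequence along which $u_i^k\to u_i$ almost everywhere (the set $\{u_i=\infty\}$ being null by Fatou and the $L^1$ bound); with equi-integrability this upgrades to $u_i^k\to u_i$ in $L^1_{loc}(I;L^1(\Omega))$, while $\sqrt{u_i^k}\to\sqrt{u_i}$ in $L^2_{loc}$ and $\nabla\sqrt{u_i^k}\rightharpoonup\nabla\sqrt{u_i}$ weakly in $L^2_{loc}$. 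Hence $u$ has the regularity required in Definition~\ref{DefinitionSolution}, and $u_i\log u_i\in L^\infty_{loc}(I;L^1(\Omega))$ by weak lower semicontinuity of $w\mapsto\int_\Omega w\log w\,dx$.

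It remains to pass to the limit in \eqref{RenormalizedFormulation} for $u^k$. Fix $\xi$ with $D\xi$ supported in $\{|w|\le L\}$ and fix $\psi$. All terms except the quadratic diffusion term $\sum_{i,j}\int_0^T\int_\Omega\psi\,\partial_i\partial_j\xi(u^k)(A_i\nabla u_i^k)\cdot\nabla u_j^k\,dx\,dt$ pass to the limit: the zeroth-order and boundary factors $\partial_i\xi(u^k)R_i^k(u^k)$, $\partial_i\xi(u^k)u_i^k\vec b_i$, $g_i\,\partial_i\xi(u^k)$ and $\vec n\cdot\vec b_i\,u_i^k\,\partial_i\xi(u^k)$ are bounded in $L^\infty$ and converge almost everywhere (using $R_i^k\to R_i$ locally uniformly, $u^k\to u$ almost everywhere, and the compactness of the trace operator $H^1(\Omega)\to L^2(\partial\Omega)$), so dominated convergence applies; while the remaining single-gradient diffusion and advection terms, after writing $\nabla u_i^k=2\sqrt{u_i^k}\nabla\sqrt{u_i^k}$, are products of the $L^\infty$-bounded, almost everywhere convergent prefactors $\partial_i\xi(u^k)\sqrt{u_i^k}$ or $\psi\,\partial_i\partial_j\xi(u^k)u_i^k\sqrt{u_j^k}$ with the weakly $L^2$-convergent gradient $\nabla\sqrt{u_i^k}$, hence also converge. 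The main obstacle is the quadratic diffusion term: it is a product of two gradients converging only weakly in $L^2$, and so does not pass to the limit under weak convergence alone.

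I expect to overcome this by proving strong convergence $\nabla\theta_L(u_i^k)\to\nabla\theta_L(u_i)$ in $L^2_{loc}(\Omega\times I)$ for the truncations $\theta_L$ above, which suffices because the quadratic integrand is supported on $\{u^k\le L\}$. The argument is of monotonicity (Minty) type: one tests the renormalized equation for $u^k$ against $\theta_L(u_i^k)$ regularized in time in the manner of Landes (a time-regularization is needed precisely because $u^k$ solves only the renormalized, not the weak, equation, so $\partial_t u_i^k$ is unavailable), compares with the analogous expression written for $u_i$, and uses the coercivity (A3) of $A_i$ to turn the resulting inequality $\limsup_k\int_0^T\!\int_\Omega A_i\nabla\theta_L(u_i^k)\cdot\nabla\theta_L(u_i^k)\le\int_0^T\!\int_\Omega A_i\nabla\theta_L(u_i)\cdot\nabla\theta_L(u_i)$, together with the weak convergence, into strong convergence. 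With strong gradient convergence on bounded sets in hand, the quadratic diffusion term converges, the limit $u$ satisfies \eqref{RenormalizedFormulation} for all admissible $\xi$ and $\psi$, and $u$ is the desired global renormalized solution with initial datum $u_0$ and the stated additional regularity. The steps I anticipate to be most delicate are this monotonicity argument --- especially the time-regularization, which must be compatible with the renormalized formulation --- and the uniform control of the boundary entropy flux in the a priori estimate.
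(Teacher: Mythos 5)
This theorem is not proved in the present paper at all: it is quoted verbatim from the author's earlier work \cite{FischerReactDiffExistence}, so there is no in-paper argument to compare against. Your outline nevertheless reproduces the strategy of that reference and of the standard renormalized-solutions literature --- truncation of the reaction rates preserving (A5), (A6) and the entropy condition \eqref{EntropyCondition}, a uniform entropy/dissipation estimate with the boundary flux absorbed by an interpolation-trace inequality, Aubin--Lions compactness for truncations $\theta_m(u_i^k)$, and a Minty/Landes-type monotonicity argument yielding strong $L^2$ convergence of $\nabla\theta_L(u_i^k)$ so that the quadratic term in \eqref{RenormalizedFormulation} passes to the limit --- and the two points you flag as delicate (the time regularization compatible with the renormalized formulation, and the strong gradient convergence of truncations) are indeed exactly where the substance of the published proof lies. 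I see no genuine gap in the proposed route.
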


Concerning the relation of weak solutions and renormalized solutions, we have shown in \cite{FischerReactDiffExistence} that any renormalized solution $u$ for which the reaction rates $R_i(u)$ are integrable -- that is, for which $R_i(u)\in L^1(\Omega\times [0,T])$ holds for all $i$ -- is also a weak solution. In the present work, we establish a relation in the converse direction:
\begin{theorem}
\label{WeakImpliesRenormalized}
Any weak solution to the reaction-diffusion equation in the sense of Definition \ref{DefinitionWeakSolution} is a renormalized solution in the sense of Definition \ref{DefinitionSolution}.
\end{theorem}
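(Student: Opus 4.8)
The plan is to obtain the renormalized identity \eqref{RenormalizedFormulation} from the weak formulation \eqref{WeakFormulation} by ``testing'' the equation for the $i$-th component with $\psi\,\partial_i\xi(u)$, summing over $i$, and invoking the chain rule $\frac{d}{dt}\xi(u)=\sum_i\partial_i\xi(u)\partial_t u_i$, $\nabla\xi(u)=\sum_i\partial_i\xi(u)\nabla u_i$. Since $\psi\,\partial_i\xi(u)$ is not a smooth test function and $u$ carries no time regularity beyond what \eqref{WeakFormulation} provides, I would regularize in time: let $u_i^h(\cdot,t):=\tfrac1h\int_t^{t+h}u_i(\cdot,s)\,ds$ be the forward Steklov average (Lipschitz in $t$ into $L^1(\Omega)$, well-defined at $t=0$, and commuting with the spatial operators), and write $w^h$ for the forward Steklov average of any function $w$. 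Averaging \eqref{WeakFormulation} in $t$ yields the weak formulation for $u_i^h$ with the coefficients $A_i\nabla u_i$, $u_i\vec b_i$, $R_i(u)$, $g_i$, $\vec n\cdot\vec b_i u_i$ replaced by their Steklov averages. After a short density argument (see below) that admits test functions of the form $\psi\,\beta(u^h)$ with $\beta$ smooth and $D\beta$ compactly supported, one tests this averaged formulation with $\psi\,\partial_i\xi(u^h)$, sums over $i$, and applies the chain rule to the Lipschitz-in-time map $t\mapsto\xi(u^h(\cdot,t))$; after integrating by parts in $t$ and in $x$ this produces exactly \eqref{RenormalizedFormulation} with $u$ and the coefficients replaced by $u^h$ and their Steklov averages. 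It then remains to let $h\to0$.

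The main obstacle --- and the reason \eqref{WeakFormulation} does not trivially imply \eqref{RenormalizedFormulation} --- is the treatment of the quadratic-gradient diffusion terms $\sum_{ij}\int_0^T\!\int_\Omega\psi\,\partial_i\partial_j\xi(u)\,(A_i\nabla u_i)\cdot\nabla u_j$ and $\sum_i\int_0^T\!\int_\Omega\partial_i\xi(u)\,(A_i\nabla u_i)\cdot\nabla\psi$. A weak solution only supplies $\nabla u_i=2\sqrt{u_i}\,\nabla\sqrt{u_i}$, which is merely locally square-integrable in time with values in $L^1(\Omega)$; these integrands are finite solely because $D\xi$ and $D^2\xi$ are compactly supported, so on their support $u$ is bounded by some fixed $R$ and there $\nabla u_i\in L^2$. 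The difficulty is that $u^h$ may satisfy $\sum_j u_j^h\le R$ at points where $u$ is unbounded, so $\nabla u_i^h$ is not controlled merely by the size of $u^h$. This is resolved by a Cauchy--Schwarz (Jensen) estimate: on $\{\sum_j u_j^h\le R\}$ one has
\[
|\nabla u_i^h|=\Big|\tfrac{2}{h}\!\int_t^{t+h}\!\sqrt{u_i}\,\nabla\sqrt{u_i}\,ds\Big|\le 2\sqrt{u_i^h}\,\Big(\tfrac1h\!\int_t^{t+h}\!|\nabla\sqrt{u_i}|^2\,ds\Big)^{1/2}\le 2\sqrt{R}\,\big((|\nabla\sqrt{u_i}|^2)^h\big)^{1/2},
\]
and similarly $|(A_i\nabla u_i)^h|\le 2\sqrt{R}\,\|A_i\|_{L^\infty}\big((|\nabla\sqrt{u_i}|^2)^h\big)^{1/2}$ while $|(u_i\vec b_i)^h|\le R\,\|\vec b_i\|_{L^\infty}$ there. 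Because $(|\nabla\sqrt{u_i}|^2)^h\to|\nabla\sqrt{u_i}|^2$ in $L^1(\Omega\times[0,T])$, the family $\{(|\nabla\sqrt{u_i}|^2)^h\}_h$ is uniformly integrable; consequently all regularized renormalized-diffusion and advection integrands, restricted to the uniformly bounded set where the corresponding derivative of $\xi$ is nonzero, are bounded in $L^2$ (respectively $L^\infty$) and uniformly integrable in the relevant power, uniformly in $h$. The same estimates justify the admissibility of the non-smooth test functions $\psi\,\partial_i\xi(u^h)$ (and hence the density step above): the only term at risk is $-\int_\Omega(A_i\nabla u_i)^h\cdot\nabla(\psi\partial_i\xi(u^h))$, and on the support of $\nabla\partial_i\xi(u^h)$ both factors are square-integrable by the above.

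Granting these uniform bounds, the limit $h\to0$ is routine. Along a subsequence $u^h\to u$, $\nabla u_i^h\to\nabla u_i$, $R_i(u)^h\to R_i(u)$ and the Steklov averages of the coefficients converge in $L^1_{loc}(\Omega\times I)$ and almost everywhere. Together with the continuity and boundedness of $\xi$, $D\xi$, $D^2\xi$, Vitali's theorem then gives that the factors carrying a derivative of $\xi$, such as $\partial_i\partial_j\xi(u^h)(A_i\nabla u_i)^h$, converge strongly in $L^2$, while the remaining gradient factors $\nabla u_j^h$ (restricted to the truncation region) converge weakly in $L^2$; the products converge in $L^1$, and pairing against the fixed $\psi\in L^\infty$ passes the integrals to the limit. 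The zeroth-order, advection and boundary terms are handled by dominated convergence, using $R_i(u)\in L^1$, $g_i,\vec b_i\in L^\infty$ and the convergence of the (time-averaged) traces. The time-slice term $\int_\Omega\xi(u^h(\cdot,T))\psi(\cdot,T)$ converges for a.e.\ $T$ since $u^h(\cdot,T)\to u(\cdot,T)$ in $L^1(\Omega)$ at Lebesgue points; and the initial term satisfies $\int_\Omega\xi(u^h(\cdot,0))\psi(\cdot,0)\to\int_\Omega\xi(u_0)\psi(\cdot,0)$ once $u_i(\cdot,t)\to(u_0)_i$ in $L^1(\Omega)$ as $t\to0^+$, which follows from the weak $L^1$-continuity at $t=0$ encoded in \eqref{WeakFormulation} together with the equi-integrability furnished by the entropy estimate (Dunford--Pettis plus strict convexity of $s\mapsto s\log s$). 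Passing to the limit in every term of the regularized identity yields \eqref{RenormalizedFormulation}. Alternatively, once the above uniform estimates are available, the argument can be packaged as an application of a renormalization lemma of the kind used in \cite{FischerReactDiffExistence}.

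The step I expect to cost the most effort is the diffusion-term analysis of the second paragraph: pinning down the uniform bound on $\nabla u_i^h$ on the truncation set and verifying that $\psi\,\partial_i\xi(u^h)$ is a legitimate test function for the Steklov-averaged weak formulation. The remaining passages to the limit are bookkeeping with standard tools.
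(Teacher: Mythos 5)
Your strategy is sound and is, at its core, the same argument as the paper's, transposed from space to time: the paper regularizes by \emph{spatial} mollification, testing \eqref{WeakFormulation} with $\rho_\delta\ast(\psi\,\partial_i\xi(\rho_\delta\ast u))$ and using the Cauchy--Schwarz bound $|\rho_\delta\ast(\sqrt{u_i}A_i\nabla\sqrt{u_i})|\leq\sqrt{(\rho_\delta\ast u_i)\,(\rho_\delta\ast|A_i\nabla\sqrt{u_i}|^2)}$ so that the compactly supported $D\xi(\rho_\delta\ast u)$ kills the factor $\sqrt{\rho_\delta\ast u_i}$; your Jensen estimate $|\nabla u_i^h|\leq 2\sqrt{u_i^h}\,((|\nabla\sqrt{u_i}|^2)^h)^{1/2}$ on $\{\sum_j u_j^h\leq R\}$ is the exact time-analogue, and the Vitali-type passage to the limit is identical. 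The trade-off between the two regularizations is real: your Steklov averaging keeps the spatial traces intact, so the boundary integrals in \eqref{RenormalizedFormulation} come out directly, whereas the paper has to flatten the boundary, reflect, and re-mollify to handle test functions that do not vanish on $\partial\Omega$. Conversely, the paper's spatial mollification buys genuine smoothness of the test function, which is what your approach gives up.

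That is where the one step you dismiss as ``a short density argument'' hides the actual difficulty. Your $u^h$ is only $W^{1,1}(\Omega)$ in space, so $\psi\,\partial_i\xi(u^h)$ is a $W^{1,1}\cap L^\infty$ test function, and \eqref{WeakFormulation} is only stated for smooth $\psi$. To approximate $\psi\,\partial_i\xi(u^h)$ by smooth functions you will want to pair $(A_i\nabla u_i)^h$ (which is merely $L^1$ off the set $\{\sum_j u_j^h\leq R\}$) against the gradient of a spatial mollification of $\psi\,\partial_i\xi(u^h)$; moving the mollifier onto the flux spreads $(A_i\nabla u_i)^h$ outside the set where your Jensen bound applies, and you are back to the product of two $L^1$ gradients. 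This is precisely the obstruction the paper's \emph{double} mollification $\rho_\delta\ast(\psi\,\partial_i\xi(\rho_\delta\ast u))$ is designed to resolve (the inner and outer mollifiers are matched so that the Cauchy--Schwarz bound controls the mollified flux by the \emph{same} $\rho_\delta\ast u_i$ that sits inside the cutoff). Your argument can be closed, but only by importing that trick (or an equivalent), so the density step is not bookkeeping --- it is the crux. A second, smaller gap: your treatment of the initial term invokes the entropy dissipation estimate to upgrade weak $L^1$-continuity at $t=0$ to strong continuity, but Theorem~\ref{WeakImpliesRenormalized} is stated under (A1)--(A6) alone, without the entropy condition \eqref{EntropyCondition} or finite initial entropy, so that route is not available as stated; you need an argument for $\xi(u^h(\cdot,0))\to\xi(u_0)$ that does not pass through the entropy (e.g.\ working with the a.e.\ $T$ formulation and Lebesgue points, as the paper implicitly does).
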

In other words, the notion of weak solutions is a stronger notion of solutions than the notion of renormalized solutions.

Our result on weak-strong uniqueness is based on an adjusted relative entropy inequality for renormalized solutions (see Proposition \ref{RelativeEntropyInequality} below) and the Gronwall lemma. The proof of the adjusted relative entropy inequality makes use of the following entropy dissipation estimate for renormalized solutions, which is also of independent interest as entropy dissipation inequalities may be used to analyze the long-time behavior of solutions (see \cite{DesvillettesFellner2,DesvillettesFellner,DesvillettesFellner4,
DesvillettesFellner3,FellnerTang,MielkeMarkowich} and the references therein):
For a system of reversible chemical reactions with mass-action kinetics subject to the detailed balance condition and the equation \eqref{SimpleReactDiff} with no-flux boundary conditions, the entropy condition holds for the choice $\mu_i:=-\log \lim_{t\rightarrow\infty}\dashint_\Omega u_i(x,t) \,dx$; for this choice, the unique minimum of the entropy $E[u]$ corresponds to the equilibrium state.

\begin{proposition}
\label{EntropyDissipation}
Suppose that the assumptions (A1)-(A6) are satisfied and suppose that the initial data $u_0$ are measurable, nonnegative, and have finite entropy, i.\,e.\ 
\begin{align*}
\sum_{i=1}^S \int_\Omega (u_0)_i (\log (u_0)_i+\mu_i-1) \,dx <\infty.
\end{align*}
Let $u$ be a renormalized solution to the reaction-diffusion-advection equation \eqref{Equation} with initial data $u_0$ and boundary conditions (B1)-(B3) in the sense of Definition~\ref{DefinitionSolution}. Then for almost every $T>0$ the entropy dissipation estimate
\begin{align*}
&\sum_{i=1}^S \int_\Omega u_i (\log u_i+\mu_i-1) \,dx
\bigg|_0^T
\\&
\leq -4\sum_{i=1}^S \int_0^T\int_\Omega A_i\nabla \sqrt{u_i}\cdot \nabla \sqrt{u_i} \,dx\,dt
\\&~~~
+\sum_{i=1}^S \int_0^T \int_\Omega \vec{b_i} \cdot \nabla u_i \,dx\,dt
\\&~~~
+\sum_{i=1}^S \int_0^T \int_{\Gamma_{In}} g_i (\log u_i+\mu_i) \,d{\mathcal{H}}^{d-1} \,dt
\\&~~~
-\sum_{i=1}^S \int_0^T \int_{\Gamma_{Out}} \vec{n}\cdot \vec{b_i} u_i (\log u_i+\mu_i) \,d{\mathcal{H}}^{d-1} \,dt
\\&~~~
+\int_0^T\int_\Omega \sum_{i=1}^S R_i(u)(\log u_i+\mu_i) \,dx\,dt
\end{align*}
is satisfied.
\end{proposition}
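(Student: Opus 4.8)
The plan is to insert into the renormalized formulation \eqref{RenormalizedFormulation}, with $\psi\equiv 1$, a family of admissible renormalizing functions $\xi_{\delta,M}$ approximating the entropy density $f(w):=\sum_{i=1}^S w_i(\log w_i+\mu_i-1)=\sum_i\phi_i(w_i)$ (where $\phi_i(s):=s(\log s+\mu_i-1)$), and then to pass to the limit $\delta\to0$, $M\to\infty$. The entropy density $f$ is itself not admissible: $\partial_i f(w)=\log w_i+\mu_i$ is singular at $w_i=0$, and $Df$ is not compactly supported. I would therefore first regularize the logarithm near the origin — take smooth $\phi_{\delta,i}\colon[0,\infty)\to\mathbb R$ with $\phi_{\delta,i}(s)=\phi_i(s)$ for $s\ge\delta$, $\phi_{\delta,i}$ affine on $[0,\delta/2]$, and $0\le\phi_{\delta,i}''(s)\le Cs^{-1}$, so that $\phi_{\delta,i}\to\phi_i$ uniformly and $\phi_{\delta,i}''(s)\to s^{-1}$, $s\phi_{\delta,i}''(s)\to 1$ for $s>0$ — and then cut off from above \emph{as a function of the entropy density}: with $F_\delta(w):=\sum_i\phi_{\delta,i}(w_i)$ and $\Psi_M\colon\mathbb R\to\mathbb R$ smooth, nondecreasing, equal to the identity on $(-\infty,\Lambda_M]$, with $\Psi_M'(t)=2-\tfrac{\log t}{\log\Lambda_M}$ (hence $\Psi_M'\in[0,1]$ and $\Psi_M''(t)=-(t\log\Lambda_M)^{-1}$) for $\Lambda_M\le t\le\Lambda_M^2$ and $\Psi_M$ constant on $[\Lambda_M^2,\infty)$, where $\Lambda_M\to\infty$, I take $\xi_{\delta,M}:=\Psi_M\circ F_\delta$. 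Then $D\xi_{\delta,M}(w)=\Psi_M'(F_\delta(w))\,(\phi_{\delta,1}'(w_1),\dots,\phi_{\delta,S}'(w_S))$ is bounded and supported in $\{F_\delta(w)<\Lambda_M^2\}$, which is a bounded subset of $(\mathbb R_0^+)^S$ because $\phi_{\delta,i}(s)\to\infty$; hence each $\xi_{\delta,M}$ is an admissible renormalizing function. The reasons for cutting off in $F_\delta$ and for the logarithmic shape of $\Psi_M$ will become clear in the last step.

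With $\psi\equiv1$ the $\nabla\psi$- and $\partial_t\psi$-terms of \eqref{RenormalizedFormulation} vanish, and from $\partial_i\partial_j\xi_{\delta,M}(u)=\Psi_M''(F_\delta(u))\,\phi_{\delta,i}'(u_i)\phi_{\delta,j}'(u_j)+\delta_{ij}\,\Psi_M'(F_\delta(u))\,\phi_{\delta,i}''(u_i)$ I would split each term into a ``main'' part and, for the two second-order terms, an ``error'' part carrying the factor $\Psi_M''$. Since $\Psi_M'(F_\delta(u))\to1$, $\phi_{\delta,i}'(u_i)\to\log u_i+\mu_i$, $\phi_{\delta,i}''(u_i)\to u_i^{-1}$, $u_i\phi_{\delta,i}''(u_i)\to1$ a.e., and $\nabla u_i=2\sqrt{u_i}\,\nabla\sqrt{u_i}$, dominated convergence (using $A_i\nabla\sqrt{u_i}\cdot\nabla\sqrt{u_i}\in L^1(\Omega\times[0,T])$ and $|\nabla u_i|\le2\sqrt{u_i}|\nabla\sqrt{u_i}|\in L^1(\Omega\times[0,T])$, which hold by the regularity $\sqrt{u_i}\in L^2_{loc}(I;H^1(\Omega))$, $u_i\in L^\infty_{loc}(I;L^1(\Omega))$) sends the main diffusion term to $-4\sum_i\int_0^T\!\int_\Omega A_i\nabla\sqrt{u_i}\cdot\nabla\sqrt{u_i}$, the main advection term to $\sum_i\int_0^T\!\int_\Omega\vec b_i\cdot\nabla u_i$, and the main reaction and boundary terms to the corresponding terms of the assertion — for the reaction term using that, up to the contribution of $\{u_i<\delta\}$ (controlled via (A6) and the local Lipschitz continuity of the $R_i$), the entropy condition \eqref{EntropyCondition} renders the term nonpositive with monotone limit, and for the boundary terms using (B2)--(B3) (on $\Gamma_{Out}$ the integrand is bounded above, so a reverse Fatou argument applies). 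On the left-hand side, $\Psi_M(F_\delta(u_0))\to f(u_0)$ in $L^1(\Omega)$ by dominated convergence and finiteness of the initial entropy, whereas $\Psi_M(F_\delta(u(\cdot,T)))\to f(u(\cdot,T))$ pointwise with a uniform lower bound, so Fatou's lemma yields $\liminf\int_\Omega\Psi_M(F_\delta(u(\cdot,T)))\,dx\ge\int_\Omega f(u(\cdot,T))\,dx$; this is exactly the step that turns the resulting identity into the asserted \emph{inequality}.

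The step I expect to be the main obstacle is to show that the error terms vanish, i.e.\ that $\int_0^T\!\int_\Omega\Psi_M''(F_\delta(u))\big(\sum_i\phi_{\delta,i}'(u_i)A_i\nabla u_i\big)\cdot\nabla F_\delta(u)\,dx\,dt\to0$ and likewise for the advection error. Here the two choices above are essential: cutting off in $\sum_i w_i$ instead of in $F_\delta$ would generate a reaction error $\int_0^T\!\int_\Omega\zeta'(\textstyle\sum_i u_i)F_\delta(u)\sum_i R_i(u)\,dx\,dt$ with no controllable sign, while a linear (rather than logarithmic) transition of $\Psi_M$ would leave an uncancelled factor $\log\Lambda_M$. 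With the present $\xi_{\delta,M}$ the reaction term picks up no $\Psi_M''$-error at all, and on the support $\{\Lambda_M\le F_\delta(u)\le\Lambda_M^2\}$ of $\Psi_M''(F_\delta(u))$ one has $|\Psi_M''(F_\delta(u))|=(F_\delta(u)\log\Lambda_M)^{-1}$, $\log F_\delta(u)\le2\log\Lambda_M$, and — because $F_\delta(u)\gtrsim u_i\log u_i$ whenever $u_i$ is large — both $|\nabla F_\delta(u)|^2$ and $\big|\sum_i\phi_{\delta,i}'(u_i)A_i\nabla u_i\big|^2$ are $\lesssim F_\delta(u)\log F_\delta(u)\sum_i|\nabla\sqrt{u_i}|^2$; multiplying, the $\log\Lambda_M$-factors cancel, so that the diffusion-error integrand is dominated by $C\,\chi_{\{F_\delta(u)\ge\Lambda_M\}}\sum_i|\nabla\sqrt{u_i}|^2$ and the advection-error integrand by $C\,\chi_{\{F_\delta(u)\ge\Lambda_M\}}\sum_i(u_i+|\nabla\sqrt{u_i}|^2)$. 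Since $F_\delta(u(x,t))<\infty$ a.e.\ and $\sum_i|\nabla\sqrt{u_i}|^2,\,u_i\in L^1(\Omega\times[0,T])$, the sets $\{F_\delta(u)\ge\Lambda_M\}$ shrink to a null set and dominated convergence drives the errors to $0$. Together with the previous step this gives $\sum_i\int_\Omega u_i(\log u_i+\mu_i-1)\,dx\big|_0^T\le(\text{the right-hand side of the claimed estimate})$. Two routine points complete the argument: for each $(\delta,M)$ the renormalized identity holds only for a.e.\ $T$, but letting $(\delta,M)$ run through a fixed countable sequence isolates a single null set, so the estimate holds for a.e.\ $T$; and on $\{u_i=0\}$ the product $R_i(u)(\log u_i+\mu_i)$ is interpreted via (A6), under which it is $\le0$ and the reaction-term limit stays monotone after splitting $R_i=R_i^+-R_i^-$.
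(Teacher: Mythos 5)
Your proof follows essentially the same route as the paper: there one renormalizes with $\xi(w)=\theta_M\big(\sum_i(w_i+\epsilon)(\log(w_i+\epsilon)+\mu_i-1)\big)$ and $\psi\equiv1$, where $\theta_M$ is exactly your logarithmically flattened cutoff of the (regularized) entropy density (characterized by $|\theta_M''(s)|\le C/(1+s\log(s+1))$ and $\theta_M'=0$ for $s\ge M^C$), and then passes to the limits $\epsilon\to0$ and $M\to\infty$ with the same dominated-convergence/Fatou bookkeeping, the same use of (A6) plus local Lipschitz continuity of $R_i$ to control the reaction term near $\{u_i=0\}$, the same sign arguments from \eqref{EntropyCondition}, (B2), (B3), and the same cancellation of the $\log\Lambda_M$ factors that makes the $\Psi_M''$-error integrand uniformly dominated by $C\sum_i|\nabla\sqrt{u_i}|^2$ on a shrinking set. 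The only cosmetic differences are that the paper regularizes the logarithm by the additive shift $w_i\mapsto w_i+\epsilon$ rather than by flattening $\phi_i$ near the origin, and that your explicit $\Psi_M$ is only $C^{1,1}$ at $t=\Lambda_M$ and $t=\Lambda_M^2$ and should be mollified to meet the smoothness requirement of Definition~\ref{DefinitionSolution}.
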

In the study of the large-time behavior of reaction-diffusion equations, the conservation laws of the reaction rates $R_i$ typically play a crucial role. For systems of chemical reactions \eqref{SystemOfEquations}, in many cases certain linear combinations of the concentrations $u_i$ are not changed by the reaction, that is for some $q\in \mathbb{R}^S$ the quantity
\begin{align}
\label{ConservedQuantity}
\sum_{i=1}^S q_i \int_\Omega u_i \,dx
\end{align}
may (at least formally) only change by fluxes through the boundary. Translated into a condition on the reaction rates, this is equivalent to the condition
\begin{align}
\label{ConservedVector1}
\sum_{i=1}^S q_i R_i(w)=0
\quad\quad \text{for all }w\in (\mathbb{R}_0^+)^S.
\end{align}
To give an example for such conservation laws, in the case of $N_R<S$ reactions of the form \eqref{SystemOfEquations} there exist at least $S-N_R$ linearly independent vectors $q\in \mathbb{R}^S$ that are orthogonal to all vectors $\beta^n-\alpha^n$, $1\leq n\leq N_R$; hence, there exist $S-N_R$ linearly independent vectors $q$ satisfying \eqref{ConservedVector1}. While it is straightforward to verify that for weak solutions to the reaction-diffusion equation \eqref{Equation} the quantity \eqref{ConservedQuantity} is indeed conserved up to fluxes through the boundary as soon as \eqref{ConservedVector1} holds, for renormalized solutions it is not directly clear whether \eqref{ConservedVector1} entails the conservation of \eqref{ConservedQuantity}. In the following proposition, we prove that the notion of renormalized solutions is indeed compatible with the conservation laws of the reaction rates $R_i$.
\begin{proposition}
\label{ConservationLaws}
Let the assumptions of Proposition~\ref{EntropyDissipation} be satisfied and let $q\in \mathbb{R}^S$ be a vector with the property
\begin{align}
\label{ConservedVector}
\sum_{i=1}^S q_i R_i(w)=0
\quad\quad \text{for any }w\in (\mathbb{R}_0^+)^S.
\end{align}
Then the renormalized solution $u$ is subject to the conservation law
\begin{align*}
&\sum_{i=1}^S q_i \int_\Omega u_i\,dx
\bigg|_0^T
=\int_0^T \int_{\Gamma_{In}} \sum_{i=1}^S q_i g_i \,d\mathcal{H}^{d-1}\,dt
-\int_0^T \int_{\Gamma_{Out}} \sum_{i=1}^S q_i (\vec{n} \cdot \vec{b}_i u_i) \,d\mathcal{H}^{d-1}\,dt
\end{align*}
for almost every $T>0$.
\end{proposition}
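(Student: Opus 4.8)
The plan is to apply the renormalized formulation \eqref{RenormalizedFormulation} with the constant test function $\psi\equiv 1$ and a sequence of renormalization functions $\xi_m$ approximating the linear map $w\mapsto q\cdot w:=\sum_{i=1}^S q_iw_i$, and then to let $m\to\infty$. Since $w\mapsto q\cdot w$ is not admissible (its derivative is not compactly supported), the construction of the $\xi_m$ is the heart of the matter. I would take $\xi_m(w):=(q\cdot w)\,\zeta_m\big(\Phi(w)\big)$, where $\Phi(w):=\sum_{i=1}^S\phi_i(w_i)$, each $\phi_i:[0,\infty)\to[0,\infty)$ a smooth convex function that equals the shifted entropy density $s\mapsto s(\log s+\mu_i-1)+e^{-\mu_i}$ for $s$ bounded away from $0$ (so that $\phi_i'(s)=\log s+\mu_i$ there), and $\zeta_m:[0,\infty)\to[0,1]$ is a cutoff with $\zeta_m\equiv 1$ on $[0,m]$, $\zeta_m\equiv 0$ on $[2m,\infty)$, $|\zeta_m'|\leq C/m$, $|\zeta_m''|\leq C/m^2$. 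Because $\Phi(w)\to\infty$ as $\sum_i w_i\to\infty$, the derivative $D\xi_m$ is compactly supported; moreover $\xi_m(w)=q\cdot w$ on $\{\Phi\leq m\}$, $|\xi_m(w)|\leq(\max_i|q_i|)\sum_iw_i$, and $\partial_j\xi_m(w)=q_j\zeta_m(\Phi(w))+(q\cdot w)\zeta_m'(\Phi(w))\phi_j'(w_j)$ is bounded uniformly in $m$ and converges pointwise to $q_j$ (here one uses that $\Phi(w)\in[m,2m]$ forces $\sum_i w_i$ to be of order $m/\log m$, so $|(q\cdot w)\zeta_m'(\Phi(w))|\lesssim 1/\log m$).

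Inserting $\xi_m$ and $\psi\equiv 1$ into \eqref{RenormalizedFormulation}, every term containing $\partial_t\psi$ or $\nabla\psi$ drops, leaving an identity, valid for a.e.\ $T$, that relates $\int_\Omega\xi_m(u(\cdot,T))-\int_\Omega\xi_m(u_0)$ to the two second-order terms, the reaction term $\sum_i\int_0^T\int_\Omega\partial_i\xi_m(u)R_i(u)$, and the two boundary terms. Letting $m\to\infty$: the left-hand side tends to $\sum_iq_i\int_\Omega u_i(\cdot,T)-\sum_iq_i\int_\Omega(u_0)_i$ by dominated convergence (domination by $(\max_i|q_i|)\sum_iu_i(\cdot,T)\in L^1(\Omega)$ for a.e.\ $T$, resp.\ by $(\max_i|q_i|)\sum_i(u_0)_i$); the boundary integrals converge to $\int_0^T\int_{\Gamma_{In}}\sum_iq_ig_i\,d\mathcal{H}^{d-1}\,dt-\int_0^T\int_{\Gamma_{Out}}\sum_iq_i(\vec n\cdot\vec b_iu_i)\,d\mathcal{H}^{d-1}\,dt$, again by dominated convergence, using $g_i\in L^\infty(\Gamma_{In}\times[0,T])$ and the existence of the boundary traces of $\vec b_i$ and $\sqrt{u_i}$; and the two second-order terms vanish. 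For the latter one writes $\nabla u_i=2\sqrt{u_i}\nabla\sqrt{u_i}$ and notes that the integrands are supported on $\{\Phi(u)\geq m\}$, whose measure tends to $0$ (since $\Phi(u)\in L^1$, as $\sum_iu_i\log u_i\in L^\infty_{loc}(I;L^1(\Omega))$); pairing each factor $\sqrt{u_i}$ with the corresponding $\phi_i'$ inside $\partial_i\partial_j\xi_m$ keeps the coefficients $|\partial_i\partial_j\xi_m(w)|\sqrt{w_i}\sqrt{w_j}$ bounded, so the diffusion term is handled by dominated convergence and the advection term by Cauchy--Schwarz against $\nabla\sqrt{u_i}\in L^2(\Omega\times[0,T])$ together with $|\{\Phi(u)\geq m\}|\to 0$.

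The main obstacle — and the reason for choosing $\Phi$ to be (a regularization of) the entropy density rather than $\sum_i w_i$ — is the reaction term, because for renormalized solutions the $R_i(u)$ need not be integrable. Here one exploits the hypothesis $\sum_iq_iR_i\equiv 0$: from the formula for $\partial_j\xi_m$,
\begin{align*}
\sum_{i=1}^S\partial_i\xi_m(w)R_i(w)
=\zeta_m(\Phi(w))\sum_{i=1}^Sq_iR_i(w)
+(q\cdot w)\zeta_m'(\Phi(w))\sum_{i=1}^S\phi_i'(w_i)R_i(w)
=(q\cdot w)\zeta_m'(\Phi(w))\sum_{i=1}^S\phi_i'(w_i)R_i(w),
\end{align*}
the first summand vanishing identically. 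Since $\phi_i'(w_i)$ coincides with $\log w_i+\mu_i$ away from $\{w_i=0\}$, the quantity $\sum_i\phi_i'(w_i)R_i(w)$ is, up to a lower-order contribution near $\{w_i=0\}$ (controlled using the local Lipschitz continuity of the $R_i$, condition (A6), and the a.e.\ finiteness of $\sum_iR_i(u)(\log u_i+\mu_i)$), dominated by $-\sum_iR_i(u)(\log u_i+\mu_i)$; by the entropy dissipation estimate of Proposition~\ref{EntropyDissipation} combined with the pointwise entropy inequality \eqref{EntropyCondition}, this last quantity belongs to $L^1(\Omega\times[0,T])$. Since in addition $(q\cdot u)\zeta_m'(\Phi(u))\to 0$ and is supported on the shrinking set $\{\Phi(u)\geq m\}$, the reaction term tends to $0$. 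Collecting the limits of all terms yields the claimed conservation law. (The treatment of the reaction term and of the singularity of the entropy density parallels the proof of Proposition~\ref{EntropyDissipation}.)
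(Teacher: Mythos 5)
Your overall architecture (renormalize with an admissible truncation of $w\mapsto q\cdot w$, test with $\psi\equiv 1$, pass to the limit) is a legitimate route, and your treatment of the time, diffusion, advection and boundary terms is essentially sound. The genuine gap is in the reaction term. After the cancellation $\zeta_m(\Phi(w))\sum_i q_iR_i(w)=0$ you are left with
\begin{align*}
\int_0^T\!\!\int_\Omega (q\cdot u)\,\zeta_m'(\Phi(u))\sum_{i=1}^S\phi_i'(u_i)R_i(u)\,dx\,dt,
\end{align*}
and to send this to zero you need the factor $\sum_i\phi_i'(u_i)R_i(u)$ to be dominated, \emph{in absolute value}, by a fixed $L^1$ function. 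Your proposed dominating function $-\sum_iR_i(u)(\log u_i+\mu_i)$ controls only the signed sum with the exact logarithms; the discrepancy $\sum_{i:\,u_i<\delta}\big(\phi_i'(u_i)-\log u_i-\mu_i\big)R_i(u)$ is not lower order. Indeed, on $\{u_i<\delta\}$ the quantity $R_i(u)$ is in no way controlled when the \emph{other} components are large: local Lipschitz continuity and (A6) only give $R_i(u)\geq -L(\|u\|_\infty)\,u_i$ with a constant depending on the size of $u$, and since no growth condition is imposed on $R_i$, the sup of $|R_i|$ on $\{\Phi\leq 2m\}$ may grow arbitrarily fast in $m$ — far faster than the gain of $1/\log m$ from the prefactor $(q\cdot u)\zeta_m'(\Phi(u))$ and the $O(1/m)$ decay of $|\{\Phi(u)\geq m\}|$. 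Moreover the entropy dissipation estimate only bounds the signed combination $\sum_iR_i(u)(\log u_i+\mu_i)$ from below in an integrated, one-sided sense; it says nothing about the individual summands or about sign-perturbed combinations, and on $\{u_i=0\}$ with $R_i(u)>0$ the unregularized summand is even $-\infty$. So the claimed domination is unsubstantiated, and this is exactly the step where the absence of integrability of $R_i(u)$ for renormalized solutions bites.

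The paper circumvents this by never asking the reaction remainder to vanish or even to be two-sidedly controlled: it renormalizes with $\theta_M\big(\rho\,q\cdot w+\sum_i(w_i+\epsilon)(\log(w_i+\epsilon)+\mu_i-1)\big)$, so that after the same cancellation the surviving reaction term is $\theta_M'(\cdot)\sum_iR_i(u)(\log(u_i+\epsilon)+\mu_i)$, which needs only an \emph{upper} bound (via $\theta_M'\geq 0$, reverse Fatou, and the entropy condition \eqref{EntropyCondition}). This yields a one-sided inequality whose $\rho$-independent terms are then discarded by dividing by $\rho$ and letting $\rho\to+\infty$, and the reverse inequality is obtained by letting $\rho\to-\infty$. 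If you want to keep your truncation $\xi_m(w)=(q\cdot w)\zeta_m(\Phi(w))$, you would need to either establish a genuine two-sided $L^1$ bound on $\sum_i\phi_i'(u_i)R_i(u)$ (which is not available from the stated hypotheses) or restructure the argument so that only one-sided information about the reaction term is used, as the paper does.
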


Our main result -- the weak-strong uniqueness principle for renormalized solutions to entropy-dissipating reaction-diffusion equations -- reads as follows. Note that in view of Theorem~\ref{WeakImpliesRenormalized} the weak-strong uniqueness result also holds for \emph{weak solutions} $u$ to the reaction-diffusion equation in place of renormalized solutions $u$.
\begin{theorem}[Weak-strong uniqueness of renormalized solutions]
\label{Theorem}
Let the assumptions (A1)-(A6) be satisfied and let the entropy condition \eqref{EntropyCondition} be satisfied by the reaction rates for some $\mu_i\in \mathbb{R}$. Let $u_0\in L^1(\Omega;(\mathbb{R}_0^+)^S)$ be strictly positive initial data subject to the bound
\begin{align*}
\sum_{i=1}^S \int_\Omega (u_0)_i (\log (u_0)_i+\mu_i-1) \,dx <\infty.
\end{align*}
Assume that there exists a ``strong'' solution $v$ to the reaction-diffusion-advection equation \eqref{Equation} with initial data $u_0$ and boundary conditions (B1)-(B3) on some time interval $[0,T_{max})$, that is a weak solution $v$ in the sense of Definition~\ref{DefinitionWeakSolution} with the additional regularities
\begin{align*}
\sup_{x\in \Omega,t\in [0,T_{max})} &v_i(x,t)~~~~~~ < ~\infty,
\\
\inf_{x\in \Omega,t\in [0,T_{max})} &v_i(x,t)~~~~~~ > ~ 0,
\\
\sup_{x\in \Omega,t\in [0,T_{max})} &|\nabla v_i(x,t)| ~~ < ~ \infty,
\\
\sup_{x\in \Omega,t\in [0,T_{max})} &\Big|\frac{d}{dt} v_i(x,t)\Big| ~ < ~ \infty,
\end{align*}
for all $i$.

Let $u$ be any renormalized solution to the reaction-diffusion-advection equation \eqref{Equation} with initial data $u_0$ and boundary conditions (B1)-(B3) in the sense of Definition~\ref{DefinitionSolution}. Then it holds that
\begin{align*}
u(\cdot,T)=v(\cdot,T) \quad\quad\text{almost everywhere in }\Omega
\end{align*}
for almost every $T<T_{max}$.
\end{theorem}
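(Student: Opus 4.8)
The plan is to deduce the theorem from an \emph{adjusted relative entropy inequality} (Proposition~\ref{RelativeEntropyInequality}) together with the Gronwall lemma, using the convexity estimates of Lemma~\ref{Lemma7} to pass from the functional $E_M[u|v]$ to the $L^2$-distance of $u$ and $v$. I would fix the cutoff concentration $M$ larger than $S\sup_{i,x,t}v_i$ and the exponent $K\ge 2$ large depending on $v$ (in particular on $\sup_{i,x,t}|\log v_i|$ and $\sup_{i,x,t}|\nabla v_i|$), take $\xi_M$ with the stated bounds $|\partial_j\xi_M(w)|\le C/(K\sum_k w_k)$ and $|\partial_j\partial_k\xi_M(w)|\le C/(K(\sum_l w_l)^2)$, and form $E_M[u|v]$. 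Since $u_0=v(\cdot,0)$ is bounded by $\sup v$ while $M>S\sup v$, one has $\xi_M(u_0)=1$ and hence $E_M[u|v](0)=0$. The goal is then to prove, in integrated form, $\frac{d}{dt}E_M[u|v]\le C(M,K,R,\sup v,\inf v,\sup|\nabla v|)\,E_M[u|v]$, so that Gronwall forces $E_M[u|v](T)=0$ for a.e.\ $T<T_{max}$; Lemma~\ref{Lemma7}, which gives $E_M[u|v]\ge 0$ with adjusted entropy density bounded below by $1$ on $\{\sum_i u_i>M\}$ and $\int_\Omega|u_i-v_i|^2\chi_{\{\sum_i u_i\le M\}}\,dx\le C(M)\,E_M[u|v]$, then yields $\sum_i u_i\le M$ a.e.\ and $u=v$ a.e.\ in $\Omega$ for a.e.\ $T<T_{max}$.

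The adjusted relative entropy inequality is assembled from three contributions. The genuine entropy part $\sum_i\int_\Omega u_i(\log u_i+\mu_i-1)\,dx\big|_0^T$ is controlled by the entropy dissipation estimate of Proposition~\ref{EntropyDissipation} for $u$, which supplies the good diffusion dissipation $-4\sum_i\int_0^T\int_\Omega A_i\nabla\sqrt{u_i}\cdot\nabla\sqrt{u_i}\,dx\,dt$ and the reaction term $\int_0^T\int_\Omega\sum_i R_i(u)(\log u_i+\mu_i)\,dx\,dt$, nonpositive by \eqref{EntropyCondition}. The mixed part $-\sum_i\int_\Omega\xi_M(u)u_i(\log v_i+\mu_i)\,dx\big|_0^T$ is produced by inserting, for each $i$, the renormalization $w\mapsto\xi_M(w)w_i$ (admissible, its derivative being compactly supported) and the test function $\psi=-(\log v_i+\mu_i)$ into \eqref{RenormalizedFormulation} and summing over $i$; as $v$ is merely Lipschitz in $x$ and $t$ rather than $C^\infty$, a routine density argument is needed, legitimate because on the support of $D(\xi_M(\cdot)(\cdot)_i)$ the functions $u_i$ and $\nabla u_i=2\sqrt{u_i}\nabla\sqrt{u_i}$ are bounded in $L^2$, so every term of \eqref{RenormalizedFormulation} depends continuously on $(\psi,\nabla\psi,\partial_t\psi)\in L^\infty$. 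This step also generates the various $\xi_M$-derivative terms and a term $-\sum_i\int_0^T\int_\Omega\xi_M(u)u_i\,\partial_t(\log v_i)\,dx\,dt=-\sum_i\int_0^T\int_\Omega\xi_M(u)\frac{u_i}{v_i}\,\partial_t v_i\,dx\,dt$; after a time-regularization (Steklov averaging) of $u$ to make the time-derivative term rigorous, this is rewritten via the equation \eqref{Equation} solved by $v$, integrating the spatial divergences $\nabla\cdot(A_i\nabla v_i)$ and $\nabla\cdot(v_i\vec b_i)$ by parts onto $\xi_M(u)\frac{u_i}{v_i}$ — which lies in $L^2_{loc}(I;H^1(\Omega))$ precisely because the cutoff confines $u$ to $\{\sum_k u_k\le M^K\}$, where $u_i\le M^K$ and $\nabla u_i\in L^2$ — producing the diffusion cross-terms and the reaction term $-\sum_i\int_0^T\int_\Omega R_i(v)\xi_M(u)\frac{u_i}{v_i}\,dx\,dt$ of \eqref{FormalRelativeEntropy}; the boundary contributions are treated using (B2) and (B3), which $u$ and $v$ both satisfy, exploiting $\vec n\cdot\vec b_i\ge 0$ on $\Gamma_{Out}$. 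Finally, $\sum_i\int_\Omega v_i\,dx\big|_0^T$ is obtained by testing the weak formulation \eqref{WeakFormulation} for $v$ with $\psi\equiv 1$, giving $\sum_i\int_0^T\big(\int_\Omega R_i(v)\,dx+\int_{\Gamma_{In}}g_i\,d\mathcal{H}^{d-1}-\int_{\Gamma_{Out}}\vec n\cdot\vec b_i v_i\,d\mathcal{H}^{d-1}\big)\,dt$. Adding the three contributions yields the rigorous counterpart of \eqref{FormalRelativeEntropy}.

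With this identity in hand, the Gronwall estimate is carried out by pulling all spatial integrals together and splitting $\Omega$ into $\{\sum_i u_i\le M\}$, $\{\sum_i u_i\ge M^K\}$ and $\{M<\sum_i u_i<M^K\}$, as sketched after \eqref{FormalRelativeEntropy}. On $\{\sum_i u_i\le M\}$ one has $\xi_M(u)=1$ and the integrand reduces to \eqref{DissipationEstimateSmallerthanM}; its diffusion part is nonpositive by (A3), and its reaction part is bounded by $C\sum_i|u_i-v_i|^2$ — using (A5) and the cancellation of the terms linear in $u-v$ on this region where $u,v$ are bounded — hence by a Gronwall term $C\,E_M[u|v]$ via Lemma~\ref{Lemma7}. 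On $\{\sum_i u_i\ge M^K\}$ one has $\xi_M(u)=0$, and by \eqref{EntropyCondition} only $\sum_i R_i(v)$ contributes positively, which for $M$ large is bounded by $C(\sup v)$ times the (there very large) adjusted entropy density. On the intermediate region, Lemma~\ref{Lemma7} gives a lower bound $\ge 1$ for the adjusted entropy density (so in particular $u\ne v$ there by the choice of $M$), whence all reaction terms — bounded there by $C(M,K,R,\inf v)$ — and all terms of the form $\sum_i u_i|\nabla v_i/v_i|^2$ are Gronwall terms, while the remaining diffusion cross-terms are absorbed into $-4\sum_i\int_0^T\int_\Omega A_i\nabla\sqrt{u_i}\cdot\nabla\sqrt{u_i}\,dx\,dt$ by Young's inequality and (A3), the $\partial_j\xi_M$- and $\partial_j\partial_k\xi_M$-terms being absorbable thanks to the factor $1/K$ in their bounds once $K$ is chosen large enough depending on $v$. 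Summing the three regions gives the differential inequality; Gronwall together with $E_M[u|v](0)=0$ then gives $E_M[u|v]\equiv 0$, hence $u=v$ a.e.\ as above.

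The principal obstacle is not the Gronwall estimate but the rigorous derivation of the relative entropy identity at the level of renormalized solutions: the critical term \eqref{CriticalTerm} and the product $R_i(v)u_i/v_i$ are genuinely uncontrollable for the \emph{unadjusted} relative entropy (as explained in the introduction), so everything rests on the cutoff $\xi_M$ simultaneously (i) rendering $\xi_M(u)u_i$ a bounded $L^2_{loc}(I;H^1(\Omega))$ function, so that $\xi_M(u)u_i/v_i$ is admissible in the weak formulation for $v$ and $w\mapsto\xi_M(w)w_i$ is admissible in \eqref{RenormalizedFormulation}; (ii) confining all reaction terms to a region where $u$ is bounded so that $R_i(u)$ and $R_i(v)$ are controlled by constants; and (iii) — via its quantitative derivative bounds and the freedom to enlarge $K$ — making the extra $\xi_M$-derivative terms absorbable. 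Getting the order of quantifiers right (first $M\gg\sup v$, then $K$ large depending on $v$ and $M$), justifying the time-regularization in the $\partial_t v_i$-term, and carefully accounting for the boundary contributions on $\Gamma_{In}$ and $\Gamma_{Out}$ are the delicate points of the argument.
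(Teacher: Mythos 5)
Your proposal is correct and follows essentially the same route as the paper: the adjusted relative entropy $E_M[u|v]$ built with the cutoff $\xi_M$, assembled from the entropy dissipation estimate of Proposition~\ref{EntropyDissipation}, the renormalization $w\mapsto \xi_M(w)w_i$ tested against $\log v_i+\mu_i$, and the weak formulation for $v$ tested against $\xi_M(u)u_i/v_i-1$ (which you equivalently split into two tests by linearity), followed by the coercivity bounds of Lemma~\ref{Lemma7}, the region splitting, the $1/K$-absorption of the $\partial\xi_M$-terms into the dissipation, the interpolation-trace treatment of the boundary terms, and Gronwall. The only cosmetic difference is that you organize the final estimate by the three regions $\{\sum_i u_i\le M\}$, $\{M<\sum_i u_i<M^K\}$, $\{\sum_i u_i\ge M^K\}$ as in the paper's introductory sketch, whereas the paper's proof estimates the fifteen terms of Proposition~\ref{RelativeEntropyInequality} one by one; the content is the same.
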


\begin{remark}
Note that our assumption of the existence of a strong solution $v$ in the theorem implicitly puts further regularity constraints on the initial data, the boundary data, and possibly the domain.

On the other hand, by standard regularity theory any bounded weak solution satisfies the regularity assumptions of our theorem, provided that the data and the domain are sufficiently smooth. Furthermore, the required uniform lower bound
\begin{align*}
\inf_{x\in \Omega,t\in [0,T_{max})} &v_i(x,t)~~~~~~ > ~ 0
\end{align*}
is available for any bounded weak solution $v$, provided that the initial data satisfy such a lower bound.
\end{remark}

\section{Proof of the Weak-Strong Uniqueness Principle}

The proof of the weak-strong uniqueness result is structured as follows: In Section~\ref{EntropyInequalitySection}, we establish the entropy estimate for renormalized solutions, that is Proposition~\ref{EntropyDissipation}, as well as the result on conservation laws, that is Proposition~\ref{ConservationLaws}. In Section~\ref{Coercivity}, we show suitable coercivity properties of the adjusted relative entropy functional which will be required for the proof of weak-strong uniqueness. In Section~\ref{RelativeEntropyInequalitySection}, we state and prove Proposition~\ref{RelativeEntropyInequality}, i.\,e.\ the relative entropy inequality satisfied for renormalized solutions $u$ and strong solutions $v$. The proof of Proposition~\ref{RelativeEntropyInequality} makes use of the entropy estimate stated in Proposition~\ref{EntropyDissipation}. In Section~\ref{WeakStrongUniquenessProof}, we demonstrate how the relative entropy inequality for renormalized solutions $u$ and strong solutions $v$ proven in Proposition~\ref{RelativeEntropyInequality} entails the weak-strong uniqueness result.

Note that by $C$ we denote a constant that may change from line to line. However, the values $M$ and $K$ are chosen once and for all, depending on the data and the strong solution $v$.

\subsection{Proof of the entropy dissipation property of renormalized solutions}
\label{EntropyInequalitySection}

\begin{proof}[Proof of Proposition \ref{EntropyDissipation}]
Let $M\geq 2$ and let $\theta_M:\mathbb{R}\rightarrow \mathbb{R}$ be a smooth function with $\theta_M(s)=s$ for $s\leq M$, $0\leq \theta_M'(s)\leq 1$ and $|\theta_M''(s)|\leq \frac{C}{1+s \log (s+1)}$ for all $s\geq 0$, and $\theta_M'(s)=0$ for $s\geq M^C$ for some $C$ large enough. Such a function exists due to $\int_M^{M^C} \frac{1}{1+s \log (s+1)} \,ds \geq 2$ for $C$ large enough.

Choosing the renormalization $\xi(w):=\theta_M\big(\sum_{i=1}^S (w_i+\epsilon) (\log (w_i+\epsilon)+\mu_i-1)\big)$ in \eqref{RenormalizedFormulation} and the test function $\psi\equiv 1$, we obtain
\begin{align*}
&\int_\Omega \theta_M\bigg(\sum_{i=1}^S (u_i+\epsilon) (\log (u_i+\epsilon)+\mu_i-1)\bigg) ~dx \Bigg|_0^T
\\&
=
-\sum_{j=1}^S \int_0^T \int_\Omega \theta_M'\bigg(\sum_{i=1}^S (u_i+\epsilon) (\log (u_i+\epsilon)+\mu_i-1)\bigg)
\frac{(A_j\nabla u_j-u_j \vec b_j) \cdot \nabla u_j}{u_j+\epsilon} ~dx ~dt
\\&~~~
-\sum_{j,k=1}^S \int_0^T \int_\Omega \theta_M''\bigg(\ldots\bigg) (\log (u_j+\epsilon)+\mu_j)(\log (u_k+\epsilon)+\mu_k)
(A_j\nabla u_j-u_j \vec b_j) \cdot \nabla u_k ~dx ~dt
\\&~~~
+\sum_{j=1}^S \int_0^T \int_\Omega \theta_M'\bigg(\sum_{i=1}^S (u_i+\epsilon) (\log (u_i+\epsilon)+\mu_i-1)\bigg) R_j(u) (\log (u_j+\epsilon)+\mu_j) ~dx~dt
\\&~~~
+\sum_{j=1}^S \int_0^T \int_{\Gamma_{In}} \theta_M'\bigg(\sum_{i=1}^S (u_i+\epsilon) (\log (u_i+\epsilon)+\mu_i-1)\bigg) g_j \big(\log (u_j+\epsilon)+\mu_j\big) ~d\mathcal{H}^{d-1} ~dt
\\&~~~
-\sum_{j=1}^S \int_0^T \int_{\Gamma_{Out}}  \theta_M'\bigg(\sum_{i=1}^S (u_i+\epsilon) (\log (u_i+\epsilon)+\mu_i-1)\bigg) (\log(u_j+\epsilon)+\mu_j) \vec n \cdot \vec b_j ~u_j ~d\mathcal{H}^{d-1} ~dt.
\end{align*}
We may then pass to the limit $\epsilon \rightarrow 0$: The passage to the limit in the term on the left-hand side is immediate using Lebesgue's theorem of dominated convergence. Rewriting the gradients $\nabla u_i$ in the form $2\sqrt{u_i}\nabla \sqrt{u_i}$ and taking into account that $\theta'_M(s)$ and $\theta''_M(s)$ vanish for $s\geq M^C$, the passage to the limit in the first two integrals on the right-hand side is also straightforward (using again dominated convergence). In the last integral, that is the boundary integral over $\Gamma_{Out}$, one may also pass to the limit using dominated convergence. For the boundary integral over $\Gamma_{In}$, one may apply Fatou's lemma in connection with the properties $0\leq \theta'_M\leq 1$ and $g_j\geq 0$ (see (B2)): For $\epsilon\leq \frac{1}{2}$ the integrand is bounded from above by $g_j ((\log u_j)_+ + 1+|\mu_j|)$, which is integrable over $\Gamma_{In}\times [0,T]$ by $\sqrt{u_i}\in L^2_{loc}(I;H^1(\Omega))$ and by a boundary trace estimate as well as $g_j \in L^\infty$. Thus, Fatou's lemma is applicable to the negative of the integral and yields
\begin{align*}
&\limsup_{\epsilon\rightarrow 0} \int_0^T \int_{\Gamma_{In}} \theta_M'\bigg(\sum_{i=1}^S (u_i+\epsilon) (\log (u_i+\epsilon)+\mu_i-1)\bigg) g_j \big(\log (u_j+\epsilon)+\mu_j\big) ~d\mathcal{H}^{d-1} ~dt
\\&
\leq \int_0^T \int_{\Gamma_{In}} \theta_M'\bigg(\sum_{i=1}^S u_i (\log u_i+\mu_i-1)\bigg) g_j \big(\log u_j+\mu_j\big) ~d\mathcal{H}^{d-1} ~dt.
\end{align*}
It remains to discuss the third integral on the right-hand side of the above formula, that is the integral containing the reaction terms. In this term, we face the problem that due to the potential failure of strict positivity of $u_j$, we have no guarantee that $\log u_j$ belongs to $L^1(\Omega)$. To deal with this issue, one may use the estimate $R_j(u)\geq -C(M,R_j)u_j$ valid for $\sum_{i=1}^S u_i (\log u_i+\mu_i-1)\leq M^C$ (which is a consequence of the assumption $R_j(w)\geq 0$ in case $w_j=0$ (see (A6)) and the Lipschitz continuity of $R_j$ on bounded subsets of $(\mathbb{R}_0^+)^S$ (see (A5))). Since $\theta'_M(s)$ vanishes for $s\geq M^C$, this bound ensures that $R_j(u)\log (u_j+\epsilon)\leq C(M) u_j |\log (u_j+\epsilon)|$, which is now bounded uniformly in $\epsilon$. Thus, one may apply Fatou's lemma (again with a uniform upper bound instead of a uniform lower bound for the functions, thereby reversing the usual direction of the inequality in Fatou's lemma). In total, we deduce
\begin{align*}
&\int_\Omega \theta_M\bigg(\sum_{i=1}^S u_i (\log u_i+\mu_i-1)\bigg) ~dx \Bigg|_0^T
\\&
\leq
-2\sum_{j=1}^S \int_0^T \int_\Omega \theta_M'\bigg(\sum_{i=1}^S u_i (\log u_i+\mu_i-1)\bigg)
\big(2A_j\nabla \sqrt{u_j}-\sqrt{u_j} \vec b_j\big) \cdot \nabla \sqrt{u_j} ~dx ~dt
\\&~~~
-2\sum_{j,k=1}^S \int_0^T \int_\Omega \theta_M''\bigg(\sum_{i=1}^S u_i (\log u_i+\mu_i-1)\bigg) \sqrt{u_j}\sqrt{u_k} (\log u_j+\mu_j)(\log u_k+\mu_k)
\\&~~~~~~~~~~~~~~~~~~~~~~~~~~\times
\big(2A_j\nabla \sqrt{u_j}-\sqrt{u_j} \vec b_j\big) \cdot \nabla \sqrt{u_k} ~dx ~dt
\\&~~~
+\int_0^T \int_\Omega \theta_M'\bigg(\sum_{i=1}^S u_i (\log u_i+\mu_i-1)\bigg) \sum_{j=1}^S  R_j(u) (\log u_j+\mu_j) ~dx~dt
\\&~~~
+\sum_{j=1}^S \int_0^T \int_{\Gamma_{In}} \theta_M'\bigg(\sum_{i=1}^S u_i (\log u_i+\mu_i-1)\bigg) g_j \big(\log u_j+\mu_j\big) ~d\mathcal{H}^{d-1} ~dt
\\&~~~
-\sum_{j=1}^S \int_0^T \int_{\Gamma_{Out}}  \theta_M'\bigg(\sum_{i=1}^S u_i (\log u_i+\mu_i-1)\bigg) (\log u_j+\mu_j) \vec n \cdot \vec b_j ~u_j ~d\mathcal{H}^{d-1} ~dt.
\end{align*}
We now pass to the limit $M\rightarrow \infty$ to obtain the desired entropy dissipation estimate. By our assumption $(u_0)_i \log (u_0)_i \in L^1(\Omega)$ and the properties of our mappings $\theta_M$, we deduce convergence of the integral on the left-hand side at initial time $t=0$ using the dominated convergence theorem. In the integral at time $t=T$ on the left-hand side, we may pass to the limit by Fatou's lemma. The regularity $\sqrt{u_i}\in L^2_{loc}(I;H^1(\Omega))$ and the properties of our mappings $\theta_M$ enable us to apply the dominated convergence theorem also to the first term on the right-hand side. The reaction term may be dealt with by Fatou's lemma, making use of the entropy dissipation property \eqref{EntropyCondition} and the fact that $\theta_M'\geq 0$. To deal with the last two terms on the right-hand side, we use Fatou's lemma, the fact that $0\leq \theta_M' \leq 1$, $g_j\geq 0$, and $\vec{n}\cdot \vec{b_j}\geq 0$ (recall also the bound $u_i \log u_i \in L^1(\partial\Omega\times [0,T])$, inferred from the bounds $u_i \in L^\infty_{loc}(I;L^1(\Omega))$ and $\sqrt{u_i}\in L^2_{loc}(I;H^1(\Omega))$ by an interpolation-trace estimate). It remains to deal with the second term on the right-hand side. This term, however, is easily seen to vanish in the limit $M\rightarrow \infty$ using dominated convergence and the regularity $\sqrt{u_i}\in L^2_{loc}(I;H^1(\Omega))$: The bound $|\theta_M''(s)|\leq \frac{C}{1+s\log(s+1)}$ implies
\begin{align*}
&\Bigg|\theta_M''\bigg(\sum_{i=1}^S u_i (\log u_i+\mu_i-1) \bigg) \sqrt{u_j}\sqrt{u_k} (\log u_j+\mu_j)(\log u_k+\mu_k)\Bigg|
\\&
\leq
C\frac{|\sqrt{u_j}\sqrt{u_k} (\log u_j+\mu_j)(\log u_k+\mu_k)|}{1+\sum_{i=1}^S u_i (\log (u_i+1))^2}
\leq C,
\end{align*}
which yields the estimate required for the dominated convergence theorem.
\end{proof}

\begin{proof}[Proof of Proposition~\ref{ConservationLaws}]
To prove the conservation law properties, we proceed similarly to the proof of the entropy dissipation estimate in Proposition~\ref{EntropyDissipation}. Let us choose some $\rho \in \mathbb{R}$ and consider the renormalization
\begin{align*}
\xi(w):=\theta_M\left(\rho\sum_{i=1}^S q_i w_i +\sum_{i=1}^S (w_i+\epsilon) (\log (w_i+\epsilon)+\mu_i-1)\right)
\end{align*}
and the test function $\psi\equiv 1$. Using essentially the same arguments as in the proof of Proposition~\ref{EntropyDissipation} -- note that for $M$ large enough (depending on $q$ and $\rho$) the modified $\xi$ satisfies the same estimates as before -- but making additionally use of the cancellation $\sum_{i=1}^S \theta_M'(\ldots) \rho q_i R_i(u) =0$, we deduce by performing the limits $\epsilon\rightarrow 0$ and subsequently $M\rightarrow \infty$ that for almost every $T>0$
\begin{align*}
&\sum_{i=1}^S \int_\Omega \rho q_i w_i + u_i (\log u_i+\mu_i-1) \,dx
\bigg|_0^T
\\&
\leq -4\sum_{i=1}^S \int_0^T\int_\Omega A_i\nabla \sqrt{u_i}\cdot \nabla \sqrt{u_i} \,dx\,dt
\\&~~~
+\sum_{i=1}^S \int_0^T \int_\Omega \vec{b_i} \cdot \nabla u_i \,dx\,dt
\\&~~~
+\sum_{i=1}^S \int_0^T \int_{\Gamma_{In}} g_i \Big(\rho q_i+(\log u_i+\mu_i)\Big) \,d{\mathcal{H}}^{d-1} \,dt
\\&~~~
-\sum_{i=1}^S \int_0^T \int_{\Gamma_{Out}} \vec{n}\cdot \vec{b_i} u_i \Big(\rho q_i+(\log u_i+\mu_i)\Big) \,d{\mathcal{H}}^{d-1} \,dt
\\&~~~
+\int_0^T\int_\Omega \sum_{i=1}^S R_i(u)(\log u_i+\mu_i) \,dx\,dt.
\end{align*}
Dividing both sides by $\rho$ for $\rho>0$ and passing to the limit $\rho\rightarrow \infty$, making use of the entropy dissipation estimate we obtain by dominated convergence
\begin{align*}
\sum_{i=1}^S \int_\Omega q_i w_i \,dx
\bigg|_0^T
\leq
\sum_{i=1}^S \int_0^T \int_{\Gamma_{In}} g_i q_i \,d{\mathcal{H}}^{d-1} \,dt
-\sum_{i=1}^S \int_0^T \int_{\Gamma_{Out}} \vec{n}\cdot \vec{b_i} u_i q_i \,d{\mathcal{H}}^{d-1} \,dt.
\end{align*}
By dividing by $\rho$ for $\rho<0$ and passing to the limit $\rho\rightarrow -\infty$ instead, the reverse inequality is obtained (as the division by the negative number $\rho$ reverses the direction of the inequality). This establishes the conservation property.
\end{proof}

\subsection{Coercivity properties of the adjusted relative entropy functional}
\label{Coercivity}

Let us first collect the properties of the cutoffs $\xi_M:(\mathbb{R}_0^+)^S\rightarrow\mathbb{R}$ that appear in the definition of the adjusted relative entropy
\begin{align*}
E_M[u|v]=\int_\Omega \sum_{i=1}^S \Big( u_i (\log u_i+\mu_i-1) - \xi_M(u) u_i (\log v_i+\mu_i) + v_i \Big) \,dx.
\end{align*}
Besides smoothness, the $\xi_M$ (with $M\geq 2$ and $K\geq 2$) have the properties
\begin{align*}
&0\leq \xi_M(u)\leq 1
~~~~~~~~~~~~~~~~~~~~~~~~~
\text{for all }u,
\\
&\xi_M(u)=1
~~~~~~~~~~~~~~~~~~~~~~~~\,\,\,\,\,\,\,\quad
\text{for all $u$ with }\sum_{i=1}^S u_i\leq M,
\\
&\xi_M(u)=0
~~~~~~~~~~~~~~~~~~~~~~~~\,\,\,\,\,\,\,\quad
\text{for all $u$ with }\sum_{i=1}^S u_i\geq M^K,
\\
&|\partial_j\xi_M(u)|\leq \frac{C}{K \sum_{i=1}^S u_i}
~~~~~~~~~~~~~
\text{for all $u$ and }j,
\\
&|\partial_j\partial_k\xi_M(u)|\leq \frac{C}{K \big|\sum_{i=1}^S u_i\big|^2} ~~~~~~~\text{for all $u$, $j$, and }k.
\end{align*}
Note that one may easily build such $\xi_M$ by setting
\begin{align*}
\xi_M(u):=\theta\left(\frac{\log \sum_{i=1}^S u_i-\log M}{(K-1)\log M}\right)
\end{align*}
where $\theta(s)$ is a usual smooth cutoff with $\theta(s)=1$ for $s\leq 0$, $\theta(s)=0$ for $s\geq 1$, and $0\leq \theta(s)\leq 1$ everywhere.

In the proof of the weak-strong uniqueness principle, we will frequently require the following two coercivity properties of our relative entropy functional.
\begin{lemma}
\label{Lemma7}
For $M$ chosen large enough (depending on $\sup_{i,x,t} v_i(x,t)$, $\mu_i$, and $S$), we have the estimates
\begin{align}
\label{EstimateLargerThanM}
&\int_\Omega \Big(1+\sum_{i=1}^S u_i+\sum_{i=1}^S u_i \log (u_i+1)\Big)\chi_{\{\sum_{i=1}^S u_i\geq M\}}  \,dx
\leq 2E_M[u|v]
\end{align}
and
\begin{align}
\label{EstimateSmallerThanM}
&\int_\Omega \sum_{i=1}^S |u_i-v_i|^2 \chi_{\{\sum_{i=1}^S u_i\leq M\}} \,dx
\leq C(M)E_M[u|v]
\end{align}
with a constant $C(M)$ depending on the $\mu_i$ and on $M$ (and on $S$).
\end{lemma}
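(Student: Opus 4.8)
The plan is to prove the two estimates \eqref{EstimateLargerThanM} and \eqref{EstimateSmallerThanM} by pointwise analysis of the adjusted relative entropy density
\[
e_M(u,v):=\sum_{i=1}^S \Big( u_i (\log u_i+\mu_i-1) - \xi_M(u) u_i (\log v_i+\mu_i) + v_i \Big),
\]
splitting into the two regimes $\sum_{i=1}^S u_i\geq M$ and $\sum_{i=1}^S u_i\leq M$ according to the support properties of $\xi_M$. Throughout, write $L:=\sup_{i,x,t}|\log v_i(x,t)+\mu_i|$, which is finite by the positivity and boundedness assumptions on $v$ that hold in the situation of Lemma~\ref{Lemma7}.

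First, for \eqref{EstimateLargerThanM}: on the set $\{\sum_{i=1}^S u_i\geq M\}$ we only know $0\le\xi_M(u)\le 1$, so the cross term is bounded crudely by $\xi_M(u)u_i(\log v_i+\mu_i)\ge -L u_i$; hence $e_M(u,v)\ge \sum_{i=1}^S\big(u_i(\log u_i+\mu_i-1)-Lu_i\big)+\sum_{i=1}^S v_i$. The elementary one-variable estimate $u(\log u+\mu-1-L)\ge \tfrac12 u\log(u+1)+\tfrac12 u+c_0$ — valid for all $u\ge 0$ once we are on the set where $u$ can be large, with $c_0$ depending on $\mu,L$ — together with the fact that when $\sum_i u_i\ge M$ at least one $u_i\ge M/S$, lets us absorb the (bounded) negative part from small components. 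Choosing $M$ large enough depending on $L$, $S$, and the $\mu_i$ makes the resulting lower bound dominate $1+\sum_i u_i+\sum_i u_i\log(u_i+1)$ on this set, with room to spare for the factor $2$; integrating over $\{\sum_i u_i\ge M\}$ and discarding the nonnegative contribution of $e_M$ on the complementary set (where $e_M\ge 0$ since there $\xi_M=1$ and $e_M$ is the standard relative entropy density, which is nonnegative) gives \eqref{EstimateLargerThanM}.

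Second, for \eqref{EstimateSmallerThanM}: on $\{\sum_{i=1}^S u_i\leq M\}$ we have $\xi_M(u)=1$, so $e_M(u,v)=\sum_{i=1}^S\big(u_i\log u_i - u_i\log v_i - (u_i-v_i)(1-\mu_i)+\ldots\big)$ is exactly the standard relative entropy density $\sum_i\big(u_i\log\frac{u_i}{v_i}-u_i+v_i\big)$ up to the affine-in-$(u_i-v_i)$ terms that cancel by the definition of the relative entropy — i.e. it equals $\sum_i v_i\,h\!\big(u_i/v_i\big)$ with $h(s)=s\log s-s+1\ge 0$. Since $h$ is smooth and uniformly convex on $[0,M/\inf v_i]$ with $h(1)=h'(1)=0$, we get $h(s)\ge c(M,\inf v)\,(s-1)^2$ on that interval; writing $s=u_i/v_i$ and multiplying by $v_i$ yields $v_i h(u_i/v_i)\ge \tfrac{c(M,\inf v)}{\sup v_i}\,|u_i-v_i|^2$. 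Summing over $i$ and integrating over $\{\sum_i u_i\leq M\}$, and again discarding the nonnegative contribution of $e_M$ on the complementary set, gives \eqref{EstimateSmallerThanM} with $C(M)$ depending on $M$, the $\mu_i$, $S$, and the (fixed) bounds on $v$; since $M$ is chosen large depending on $\sup v_i$, this is consistent with the statement.

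The only mildly delicate point — and the step I would be most careful about — is the bookkeeping in \eqref{EstimateLargerThanM}: one must verify that the constant $c_0$ and the "spillover" of the crude bound $-Lu_i$ onto the small components $u_i$ (where $u_i\log(u_i+1)$ need not dominate $Lu_i$) are genuinely controlled by choosing $M$ large, rather than requiring a bound on $L$ itself. This works because on $\{\sum_i u_i\ge M\}$ the total mass is at least $M$, so the good term $\sum_i u_i\log(u_i+1)$ coming from the largest component already exceeds any fixed multiple of $S\cdot$(bounded negative contributions) once $M$ is large; the superlinear growth of $s\log(s+1)$ is what makes the threshold $M=M(L,S,\mu_i)$ suffice. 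The rest is routine one-variable calculus and the nonnegativity of the unmodified relative entropy density.
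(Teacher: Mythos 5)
Your proof is correct and follows essentially the same argument as the paper: a pointwise case distinction in which $\sum_i u_i\log u_i$ dominates all other terms on $\{\sum_{i} u_i\ge M\}$ once $M$ is large, while on $\{\sum_{i} u_i\le M\}$ the density reduces (via $\xi_M=1$) to the standard nonnegative relative entropy, whose uniform convexity on bounded sets yields the quadratic lower bound. The only cosmetic difference is that your route through $h(s)=s\log s-s+1$ on $[0,M/\inf_{i,x,t} v_i]$ produces a constant that also depends on $\inf v_i$ and $\sup v_i$ — harmless in every application, and avoidable by instead bounding the Bregman divergence of $w\mapsto\sum_i w_i(\log w_i+\mu_i-1)$ along the segment from $v$ to $u$, where the Hessian is bounded below by $1/M$.
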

\begin{proof}
Both estimates \eqref{EstimateLargerThanM} and \eqref{EstimateSmallerThanM} will be established by purely pointwise estimates for the adjusted relative entropy, distinguishing the cases $\sum_{i=1}^S u_i>M$ and $\sum_{i=1}^S u_i\leq M$.

To establish \eqref{EstimateLargerThanM}, it is sufficient to observe that
\begin{itemize}
\item
for $\sum_{i=1}^S u_i\geq M$ and $M$ large enough (depending on $\sup_{i,x,t} v_i$, $\mu_i$, and $S$) the term $\sum_{i=1}^S u_i \log u_i$ strongly dominates all other terms in the definition of $E_M[u|v]$ and also the term $\sum_{i=1}^S u_i$, and
\item for $\sum_{i=1}^S u_i\leq M$ we have $\xi_M(u)=1$ and the adjusted relative entropy density thus becomes the standard relative entropy density $\sum_{i=1}^S \big(u_i (\log u_i+\mu_i-1)-u_i (\log v_i+\mu_i)+v_i\big)$, which is nonnegative.
\end{itemize}

To prove \eqref{EstimateSmallerThanM}, we use the nonnegativity of the function $\sum_{i=1}^S \big(u_i (\log u_i+\mu_i-1) - \xi_M(u) u_i (\log v_i+\mu_i) + v_i\big)$ on $\{\sum_{i=1}^S u_i>M\}$ (which holds by our assumption of $M$ being large). On the set $\{\sum_{i=1}^S u_i\leq M\}$, we have $\xi_M(u)=1$ and therefore our adjusted relative entropy density reduced to the standard relative entropy density $\sum_{i=1}^S \big(u_i (\log u_i+\mu_i-1)-u_i (\log v_i+\mu_i)+v_i\big)$. We then make use of the uniform convexity of the function $u\mapsto \sum_{i=1}^S u_i (\log u_i+\mu_i-1)$ on bounded subsets of $(\mathbb{R}_0^+)^S$ (note that we may also assume $\sup_{i,x,t} v_i\leq M$ by requiring $M$ to be large enough) to establish the desired lower bound $\frac{1}{C(M)} \sum_{i=1}^S |u_i-v_i|^2$.
\end{proof}

To estimate the boundary terms, we are going to use the following technical lemma.
\begin{lemma}
Let $\Omega$ be a Lipschitz domain and let $u_i$ be nonnegative and satisfy $\sqrt{u_i}\in L^2([0,T];H^1(\Omega))$. Let $v_i$ be Lipschitz and uniformly bounded from below by a positive number. Then for any $\varepsilon>0$ the estimate
\begin{align}
\label{BoundaryTraceEstimate}
&\int_0^T \int_{\partial\Omega} |\sqrt{u_i}-\sqrt{v_i}|^2 \,d\mathcal{H}^{d-1}\,dt
\\&\nonumber
\leq
\varepsilon \int_0^T \int_\Omega u_i\left|\frac{\nabla u_i}{u_i}-\frac{\nabla v_i}{v_i}\right|^2 \chi_{\{\sum_{i=1}^S u_i \leq M\}}\,dx \,dt
\\&~~~\nonumber
+\varepsilon \int_0^T \int_\Omega |\nabla \sqrt{u_i}|^2 \chi_{\{\sum_{i=1}^S u_i > M\}}\,dx \,dt
\\&~~~\nonumber
+C(d,\Omega,\varepsilon,M,\inf_{i,x,t}v_i,\sup_{i,x,t}v_i,\sup_{i,x,t}|\nabla v_i|) \int_0^T  E_M[u|v] \,dt
\end{align}
holds.
\end{lemma}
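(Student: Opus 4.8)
The plan is to bound the boundary trace of $\sqrt{u_i}-\sqrt{v_i}$ by a standard trace/interpolation inequality, taking care to land the interior gradient term on the right-hand side in exactly the two forms permitted by the statement (the one weighted with the relative-entropy-controlled quantity on $\{\sum u_i\le M\}$, and the plain $|\nabla\sqrt{u_i}|^2$ on $\{\sum u_i>M\}$), while everything else gets absorbed into $C\cdot E_M[u|v]$. First I would invoke the standard trace inequality for Lipschitz domains: for $w\in H^1(\Omega)$ one has $\|w\|_{L^2(\partial\Omega)}^2\le C(d,\Omega)\,\big(\varepsilon'\|\nabla w\|_{L^2(\Omega)}^2+C(\varepsilon')\|w\|_{L^2(\Omega)}^2\big)$. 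Since $v_i$ is Lipschitz and bounded below, $\sqrt{v_i}$ is also Lipschitz, hence $\nabla\sqrt{v_i}=\tfrac{\nabla v_i}{2\sqrt{v_i}}$ is bounded by $C(\inf v_i,\sup|\nabla v_i|)$; and $\sqrt{u_i}\in L^2([0,T];H^1(\Omega))$ by hypothesis, so $w:=\sqrt{u_i}-\sqrt{v_i}\in L^2([0,T];H^1(\Omega))$ and the trace inequality applies for a.e. $t$. Integrating in time gives
\begin{align*}
\int_0^T\!\!\int_{\partial\Omega}|\sqrt{u_i}-\sqrt{v_i}|^2\,d\mathcal{H}^{d-1}\,dt
\le C(d,\Omega)\varepsilon'\!\int_0^T\!\!\int_\Omega|\nabla\sqrt{u_i}-\nabla\sqrt{v_i}|^2\,dx\,dt
+C(d,\Omega,\varepsilon')\!\int_0^T\!\!\int_\Omega|\sqrt{u_i}-\sqrt{v_i}|^2\,dx\,dt.
\end{align*}

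Next I would handle the two terms separately. For the zeroth-order term, split the spatial integral according to $\{\sum u_i\le M\}$ and $\{\sum u_i>M\}$. On $\{\sum u_i\le M\}$, $|\sqrt{u_i}-\sqrt{v_i}|^2\le|u_i-v_i|$ (using $|\sqrt a-\sqrt b|^2\le|a-b|$) and, since both $u_i$ and $v_i$ are bounded there (the latter by $M$ after enlarging $M$), $|u_i-v_i|\le C(M)|u_i-v_i|^2+C(M)$ is false as stated but $|u_i-v_i|\le C(M)^{1/2}|u_i-v_i|$ trivially and one uses instead $|u_i-v_i|\le \tfrac{1}{2}(1+|u_i-v_i|^2)$; the constant absorbs into $E_M[u|v]$ via \eqref{EstimateSmallerThanM} together with the fact that $|\Omega|\le C(M)E_M[u|v]$ is not available — instead I would simply bound $\int|\sqrt{u_i}-\sqrt{v_i}|^2\chi_{\{\sum u_i\le M\}}\le C(M)\int|u_i-v_i|^2\chi_{\{\sum u_i\le M\}}\le C(M)E_M[u|v]$ directly by \eqref{EstimateSmallerThanM}. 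On $\{\sum u_i>M\}$, one has $|\sqrt{u_i}-\sqrt{v_i}|^2\le C(u_i+v_i)\le C(1+\sum_i u_i)$, which by \eqref{EstimateLargerThanM} is bounded by $C\,E_M[u|v]$. For the gradient term, I again split: on $\{\sum u_i\le M\}$ write $\nabla\sqrt{u_i}-\nabla\sqrt{v_i}=\tfrac12\sqrt{u_i}\big(\tfrac{\nabla u_i}{u_i}-\tfrac{\nabla v_i}{v_i}\big)$, so $|\nabla\sqrt{u_i}-\nabla\sqrt{v_i}|^2=\tfrac14 u_i\big|\tfrac{\nabla u_i}{u_i}-\tfrac{\nabla v_i}{v_i}\big|^2$, which is precisely the first term on the right of \eqref{BoundaryTraceEstimate} — choose $\varepsilon'$ so that $\tfrac14 C(d,\Omega)\varepsilon'\le\varepsilon$. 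On $\{\sum u_i>M\}$, use $|\nabla\sqrt{u_i}-\nabla\sqrt{v_i}|^2\le 2|\nabla\sqrt{u_i}|^2+2|\nabla\sqrt{v_i}|^2$; the first piece is the second term on the right of \eqref{BoundaryTraceEstimate} (again adjusting $\varepsilon'$), while $\int|\nabla\sqrt{v_i}|^2\chi_{\{\sum u_i>M\}}\le C(\inf v_i,\sup|\nabla v_i|)\int\chi_{\{\sum u_i>M\}}\le C\int(1+\sum_i u_i)\chi_{\{\sum u_i>M\}}\le C\,E_M[u|v]$ by \eqref{EstimateLargerThanM}.

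Collecting all pieces, summing the $\varepsilon$-contributions and folding every remaining term into a single constant $C(d,\Omega,\varepsilon,M,\inf_{i,x,t}v_i,\sup_{i,x,t}v_i,\sup_{i,x,t}|\nabla v_i|)$ times $\int_0^T E_M[u|v]\,dt$ yields \eqref{BoundaryTraceEstimate}. The only mildly delicate point is the bookkeeping of the cutoff functions $\chi_{\{\sum u_i\le M\}}$ versus $\chi_{\{\sum u_i>M\}}$ when passing between $|\sqrt{u_i}-\sqrt{v_i}|^2$, $u_i\big|\tfrac{\nabla u_i}{u_i}-\tfrac{\nabla v_i}{v_i}\big|^2$ and $|\nabla\sqrt{u_i}|^2$ — one must be sure never to need the $H^1$-norm of $\sqrt{u_i}$ weighted by the relative entropy on the unbounded set, which is why the splitting is done before, not after, applying the trace inequality; strictly speaking one applies the trace inequality to $\sqrt{u_i}-\sqrt{v_i}$ globally and then splits the two resulting interior integrals, which is legitimate since both $\chi$'s are just indicator functions of measurable sets. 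The main obstacle, such as it is, is organizing these case distinctions cleanly; there is no deep difficulty, only the need to verify that each leftover term genuinely admits an $E_M[u|v]$ bound via Lemma~\ref{Lemma7}.
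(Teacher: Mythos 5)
Your overall route is the same as the paper's: apply the interpolation--trace inequality to $\sqrt{u_i}-\sqrt{v_i}$ for a.e.\ $t$, then split the two resulting interior integrals along $\{\sum_{i=1}^S u_i\leq M\}$ versus $\{\sum_{i=1}^S u_i>M\}$ and absorb everything that is not one of the two $\varepsilon$-terms into $C\int_0^T E_M[u|v]\,dt$ via Lemma~\ref{Lemma7}. The treatment of the zeroth-order term and of the gradient term on $\{\sum_{i=1}^S u_i>M\}$ is correct and matches the paper.

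There is, however, one genuine error in the gradient term on $\{\sum_{i=1}^S u_i\leq M\}$: the identity
$\nabla\sqrt{u_i}-\nabla\sqrt{v_i}=\tfrac12\sqrt{u_i}\bigl(\tfrac{\nabla u_i}{u_i}-\tfrac{\nabla v_i}{v_i}\bigr)$
is false. Indeed, $\tfrac12\sqrt{u_i}\,\tfrac{\nabla v_i}{v_i}=\tfrac{\sqrt{u_i}}{\sqrt{v_i}}\nabla\sqrt{v_i}\neq\nabla\sqrt{v_i}$ unless $u_i=v_i$; the correct decomposition is
\begin{align*}
\nabla\sqrt{u_i}-\nabla\sqrt{v_i}
=\tfrac12\sqrt{u_i}\Bigl(\tfrac{\nabla u_i}{u_i}-\tfrac{\nabla v_i}{v_i}\Bigr)
+\tfrac12\bigl(\sqrt{u_i}-\sqrt{v_i}\bigr)\tfrac{\nabla v_i}{v_i},
\end{align*}
so your claimed equality $|\nabla\sqrt{u_i}-\nabla\sqrt{v_i}|^2=\tfrac14 u_i|\tfrac{\nabla u_i}{u_i}-\tfrac{\nabla v_i}{v_i}|^2$ drops a cross term. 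The omission is harmless but must be accounted for: the extra contribution is bounded by $C\,|\sqrt{u_i}-\sqrt{v_i}|^2\,|\tfrac{\nabla v_i}{v_i}|^2$, and on $\{\sum_{i=1}^S u_i\leq M\}$ one has $|\sqrt{u_i}-\sqrt{v_i}|^2\leq |u_i-v_i|^2/\inf v_i$, so this term is again controlled by $C(M,\inf v_i,\sup|\nabla v_i|)\,E_M[u|v]$ via \eqref{EstimateSmallerThanM} — exactly the additional term the paper carries explicitly in its intermediate estimate. With this correction (and the attendant factor-of-two adjustments of $\varepsilon'$ from the triangle inequality), your argument coincides with the paper's proof.
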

\begin{proof}
By a standard interpolation-trace inequality, we have for any $\varepsilon>0$
\begin{align*}
&\int_{\partial\Omega} |\sqrt{u_i}-\sqrt{v_i}|^2 \,d\mathcal{H}^{d-1}
\\&
\leq \frac{\varepsilon}{2} \int_\Omega |\nabla (\sqrt{u_i}-\sqrt{v_i})|^2 \,dx
+C(d,\Omega,\varepsilon) \int_\Omega |\sqrt{u_i}-\sqrt{v_i}|^2 \,dx.
\end{align*}
This implies
\begin{align*}
&\int_0^T \int_{\partial\Omega} |\sqrt{u_i}-\sqrt{v_i}|^2 \,d\mathcal{H}^{d-1}\,dt
\\&
\leq \varepsilon \int_0^T \int_\Omega u_i\left|\frac{\nabla u_i}{u_i}-\frac{\nabla v_i}{v_i}\right|^2 \chi_{\{\sum_{i=1}^S u_i \leq M\}}\,dx \,dt
\\&~~~
+\varepsilon \int_0^T \int_\Omega |\sqrt{u_i}-\sqrt{v_i}|^2 \left|\frac{\nabla v_i}{v_i}\right|^2  \chi_{\{\sum_{i=1}^S u_i \leq M\}}\,dx \,dt
\\&~~~
+\varepsilon \int_0^T \int_\Omega |\nabla \sqrt{u_i}|^2 \chi_{\{\sum_{i=1}^S u_i > M\}}\,dx \,dt
\\&~~~
+\varepsilon \int_0^T \int_\Omega |\nabla \sqrt{v_i}|^2 \chi_{\{\sum_{i=1}^S u_i > M\}}\,dx \,dt
\\&~~~
+C(d,\Omega,\varepsilon) \int_0^T \int_\Omega |\sqrt{u_i}-\sqrt{v_i}|^2 \,dx \,dt.
\end{align*}
For $M$ large enough, this implies the desired bound by the coercivity properties of $E_M[u|v]$ established in \eqref{EstimateLargerThanM} and \eqref{EstimateSmallerThanM}.
\end{proof}

\subsection{Estimate for the adjusted relative entropy}
\label{RelativeEntropyInequalitySection}
\begin{proposition}
\label{RelativeEntropyInequality}
Suppose that the assumptions (A1)-(A6) are satisfied and suppose that the initial data $u_0$ are positive, measurable, and have finite entropy, i.\,e.\ 
\begin{align*}
\sum_{i=1}^S \int_\Omega (u_0)_i (\log (u_0)_i+\mu_i-1) \,dx <\infty.
\end{align*}
Let $v$ be a ``strong'' solution to the reaction-diffusion-advection equation \eqref{Equation} with initial data $u_0$ and boundary conditions (B1)-(B3) on some time interval $[0,T_{max})$, that is let $v$ be a weak solution in the sense of Definition~\ref{DefinitionWeakSolution} with the additional regularities
\begin{align*}
\sup_{x\in \Omega,t\in [0,T_{max})} &v_i(x,t)~~~~~~ < ~\infty,
\\
\inf_{x\in \Omega,t\in [0,T_{max})} &v_i(x,t)~~~~~~ > ~ 0,
\\
\sup_{x\in \Omega,t\in [0,T_{max})} &|\nabla v_i(x,t)| ~~ < ~ \infty,
\\
\sup_{x\in \Omega,t\in [0,T_{max})} &\Big|\frac{d}{dt} v_i(x,t)\Big| ~ < ~ \infty,
\end{align*}
for all $i$.

Let $u$ be a renormalized solution to the reaction-diffusion-advection equation \eqref{Equation} with initial data $u_0$ and boundary conditions (B1)-(B3) in the sense of Definition~\ref{DefinitionSolution}. Then for almost every $T\in [0,T_{max})$ the adjusted relative entropy satisfies
\begin{align*}
&E_M[u|v]\bigg|_0^T
:=\int_\Omega \sum_{i=1}^S \Big( u_i (\log u_i+\mu_i-1) - \xi_M(u) u_i (\log v_i+\mu_i) + v_i \Big) \,dx \Bigg|_0^T
\\&
\leq \sum_{i=1}^S \int_0^T \int_\Omega - u_i A_i \frac{\nabla u_i}{u_i} \cdot \frac{\nabla u_i}{u_i} - u_i \xi_M(u) A_i \frac{\nabla v_i}{v_i} \cdot \frac{\nabla v_i}{v_i}
\\&~~~~~~~~~~~~~~~~~~
+u_i \xi_M(u) A_i \frac{\nabla u_i}{u_i}\cdot \frac{\nabla v_i}{v_i}
+u_i \xi_M(u) A_i \frac{\nabla v_i}{v_i}\cdot \frac{\nabla u_i}{u_i} \,dx\,dt
\\&~~~
+\sum_{i,j=1}^S\int_0^T  \int_\Omega \partial_j \xi_M(u) (\log v_i+\mu_i) (A_i\nabla u_i \cdot \nabla u_j + A_j \nabla u_j \cdot \nabla u_i) \,dx\,dt
\\&~~~
+\sum_{i,j,k=1}^S \int_0^T \int_\Omega u_i \partial_j \partial_k \xi_M(u) (\log v_i+\mu_i) A_j\nabla u_j \cdot \nabla u_k \,dx\,dt
\\&~~~
+\sum_{i,j=1}^S \int_0^T \int_\Omega u_i u_j \partial_j \xi_M(u) \bigg(A_j \frac{\nabla u_j}{u_j} \cdot \frac{\nabla v_i}{v_i} + A_i \frac{\nabla v_i}{v_i} \cdot \frac{\nabla u_j}{u_j} \bigg) \,dx\,dt
\\&~~~
+\sum_{i=1}^S \int_0^T \int_\Omega (1-\xi_M(u)) \vec{b_i} \cdot \nabla u_i \,dx\,dt
\\&~~~
-\sum_{i,j=1}^S \int_0^T \int_\Omega \partial_j \xi_M(u) (\log v_i+\mu_i) \big(u_i \vec b_i\cdot \nabla u_j + u_j \vec b_j \cdot \nabla u_i\big) ~dx ~dt
\\&~~~
-\sum_{i,j,k=1}^S \int_0^T \int_\Omega u_i \partial_j\partial_k \xi_M(u) (\log v_i+\mu_i) u_j \vec b_j \cdot \nabla u_k ~dx ~dt
\\&~~~
-\sum_{i,j=1}^S \int_0^T \int_\Omega u_i \partial_j \xi_M(u) u_j \vec{b_j} \cdot \frac{\nabla v_i}{v_i} \,dx\,dt
\\&~~~
-\sum_{i,j=1}^S \int_0^T \int_\Omega u_i \partial_j \xi_M(u) \vec{b_i} \cdot \nabla u_j \,dx\,dt
\\&~~~
+ \int_0^T \int_\Omega \sum_{i=1}^S \bigg(R_i(u)\big(\log u_i+\mu_i-\xi_M(u) (\log v_i+\mu_i)\big) - R_i(v)\Big(\xi_M(u)\frac{u_i}{v_i}-1 \Big)\bigg) \,dx\,dt
\\&~~~
-\sum_{i,j=1}^S \int_0^T \int_\Omega u_i  \partial_j \xi_M(u) R_j(u) (\log v_i+\mu_i)  \,dx\,dt
\\&~~~
+\sum_{i=1}^S \int_0^T \int_{\Gamma_{In}} g_i \big(\log u_i+\mu_i-\xi_M(u)(\log v_i+\mu_i)\big) - g_i \Big(\frac{u_i}{v_i} \xi_M(u)-1\Big) \,d{\mathcal{H}}^{d-1} \,dt
\\&~~~
-\sum_{i,j=1}^S \int_0^T \int_{\Gamma_{In}} g_j (\log v_i+\mu_i) u_i \partial_j \xi_M(u) \,d{\mathcal{H}}^{d-1} \,dt
\\&~~~
-\sum_{i=1}^S \int_0^T \int_{\Gamma_{Out}} \vec{n}\cdot \vec{b_i}\, u_i \big(\log u_i+\mu_i-\xi_M(u)(\log v_i+\mu_i)\big)
\\&~~~~~~~~~~~~~~~~~~~~~~~~~~~~~
-\vec n \cdot  \vec b_i \,v_i \Big(\frac{u_i}{v_i} \xi_M(u)-1\Big)
\,d{\mathcal{H}}^{d-1} \,dt
\\&~~~
+\sum_{i,j=1}^S \int_0^T \int_{\Gamma_{Out}} \vec n \cdot \vec b_j \, u_j (\log v_i+\mu_i) u_i \partial_j \xi_M(u) ~d\mathcal{H}^{d-1} ~dt.
\end{align*}
\end{proposition}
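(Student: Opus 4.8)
The plan is to start from the algebraic identity
\begin{align*}
E_M[u|v]=E[u]-\int_\Omega \sum_{i=1}^S \xi_M(u)\,u_i(\log v_i+\mu_i)\,dx+\int_\Omega \sum_{i=1}^S v_i\,dx,
\end{align*}
with $E[u]=\int_\Omega\sum_{i=1}^S u_i(\log u_i+\mu_i-1)\,dx$, and to evaluate $E_M[u|v]\bigg|_0^T$ by treating the three contributions separately. For the first, I invoke the entropy dissipation estimate of Proposition~\ref{EntropyDissipation}; rewriting $4A_i\nabla\sqrt{u_i}\cdot\nabla\sqrt{u_i}$ as $u_iA_i\frac{\nabla u_i}{u_i}\cdot\frac{\nabla u_i}{u_i}$ (interpreted via $\nabla u_i=2\sqrt{u_i}\nabla\sqrt{u_i}$, using $\nabla u_i=0$ a.e.\ on $\{u_i=0\}$), this already yields the term $-u_iA_i\frac{\nabla u_i}{u_i}\cdot\frac{\nabla u_i}{u_i}$, the advection integral $\sum_i\int_0^T\int_\Omega\vec b_i\cdot\nabla u_i$, the reaction integral $\sum_i\int_0^T\int_\Omega R_i(u)(\log u_i+\mu_i)$, and the $\Gamma_{In}$/$\Gamma_{Out}$ boundary integrals carrying the factor $\log u_i+\mu_i$. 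For the third contribution I test the weak formulation of the strong solution $v$ with $\psi\equiv 1$, obtaining $\int_\Omega\sum_i v_i\big|_0^T=\sum_i\int_0^T\int_\Omega R_i(v)\,dx\,dt+\sum_i\int_0^T\int_{\Gamma_{In}}g_i\,d\mathcal{H}^{d-1}\,dt-\sum_i\int_0^T\int_{\Gamma_{Out}}\vec n\cdot\vec b_i v_i\,d\mathcal{H}^{d-1}\,dt$; this accounts for the ``$+R_i(v)$'' piece and the ``$+1$''/``$-1$'' pieces of the corresponding right-hand side terms.

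The core of the argument is the middle contribution. For each fixed $\ell$ I would use the renormalized formulation \eqref{RenormalizedFormulation} with the renormalization $\xi(w):=\xi_M(w)\,w_\ell$ -- admissible since $D\xi$ is bounded and supported in $\{\sum_i w_i\le M^K\}$, with $\partial_j\xi=(\partial_j\xi_M)w_\ell+\xi_M\delta_{j\ell}$ and $\partial_j\partial_k\xi=(\partial_j\partial_k\xi_M)w_\ell+(\partial_j\xi_M)\delta_{k\ell}+(\partial_k\xi_M)\delta_{j\ell}$ -- and the test function $\psi:=-(\log v_\ell+\mu_\ell)$. Since $v$ is only a strong solution, $\log v_\ell+\mu_\ell$ is merely Lipschitz in $x$ and $t$ (with $\partial_t\log v_\ell=\frac{\partial_t v_\ell}{v_\ell}\in L^\infty$, using the lower bound on $v_\ell$ and the bound on $\partial_t v_\ell$), so I must first extend the admissible test functions in \eqref{RenormalizedFormulation} from $C^\infty(\overline{\Omega}\times I)$ to Lipschitz functions by density; this is licit because every term in \eqref{RenormalizedFormulation} is continuous under mollification of $\psi$, using $\sqrt{u_j}\in L^2_{loc}(I;H^1(\Omega))$, the boundedness of $u$ on the support of $\xi$ and $D\xi$ (so that $\nabla u_j=2\sqrt{u_j}\nabla\sqrt{u_j}\in L^2_{loc}$ there), and interpolation-trace bounds for the boundary integrals. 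Expanding the derivatives of $\xi$ and applying the chain rule, the $\ell$-th instance of \eqref{RenormalizedFormulation} becomes
\begin{align*}
-\int_\Omega \xi_M(u)\,u_\ell(\log v_\ell+\mu_\ell)\,dx\bigg|_0^T=-\int_0^T\int_\Omega \xi_M(u)\frac{u_\ell}{v_\ell}\,\partial_t v_\ell\,dx\,dt+(\star)_\ell,
\end{align*}
where, after summation in $\ell$, the collection $\sum_\ell(\star)_\ell$ is exactly the set of right-hand side terms carrying a factor $\log v_i+\mu_i$ under the integral (the diffusion terms $\partial_j\xi_M(u)(\log v_i+\mu_i)A\nabla u\cdot\nabla u$ and $u_i\partial_j\partial_k\xi_M(u)(\log v_i+\mu_i)A\nabla u\cdot\nabla u$, the corresponding $\vec b$-advection terms, the reaction terms $-R_i(u)\xi_M(u)(\log v_i+\mu_i)$ and $-u_i\partial_j\xi_M(u)R_j(u)(\log v_i+\mu_i)$, the boundary terms), together with one of the two -- non-symmetric, as $A_i$ need not be symmetric -- cross diffusion terms $u_i\xi_M(u)A_i\frac{\nabla u_i}{u_i}\cdot\frac{\nabla v_i}{v_i}$ and the $A_j\frac{\nabla u_j}{u_j}\cdot\frac{\nabla v_i}{v_i}$ half of the $u_iu_j\partial_j\xi_M(u)$ diffusion term (plus the analogous advection pieces $-u_i\partial_j\xi_M(u)u_j\vec b_j\cdot\frac{\nabla v_i}{v_i}$, etc.).

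It remains to rewrite $-\int_0^T\int_\Omega\xi_M(u)\frac{u_\ell}{v_\ell}\partial_t v_\ell$, a legitimate integral since $\partial_t v_\ell\in L^\infty$. I do this via the weak formulation of $v$: since $v_\ell$ is Lipschitz in time, \eqref{WeakFormulation} for smooth $\psi$ is equivalent to
\begin{align*}
\int_0^T\int_\Omega\partial_t v_\ell\,\psi\,dx\,dt
={}&-\int_0^T\int_\Omega(A_\ell\nabla v_\ell-v_\ell\vec b_\ell)\cdot\nabla\psi\,dx\,dt
+\int_0^T\int_\Omega R_\ell(v)\,\psi\,dx\,dt\\
&+\int_0^T\int_{\Gamma_{In}}g_\ell\,\psi\,d\mathcal{H}^{d-1}\,dt
-\int_0^T\int_{\Gamma_{Out}}\vec n\cdot\vec b_\ell\,v_\ell\,\psi\,d\mathcal{H}^{d-1}\,dt,
\end{align*}
and both sides extend by density to $\psi=\phi:=\xi_M(u)\frac{u_\ell}{v_\ell}$, which lies in $L^\infty\cap L^2_{loc}(I;H^1(\Omega))$ (again because $v_\ell$ is bounded below and, on the support of $\xi_M(u)$, $u_\ell\le M^K$ with $\nabla u_\ell\in L^2_{loc}$) and has an $L^2$ boundary trace. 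Expanding $\nabla(\xi_M(u)\frac{u_\ell}{v_\ell})$ by the chain rule, this step produces precisely the remaining diffusion terms $-u_i\xi_M(u)A_i\frac{\nabla v_i}{v_i}\cdot\frac{\nabla v_i}{v_i}$, the second cross term $u_i\xi_M(u)A_i\frac{\nabla v_i}{v_i}\cdot\frac{\nabla u_i}{u_i}$ and the $A_i\frac{\nabla v_i}{v_i}\cdot\frac{\nabla u_j}{u_j}$ half of the $u_iu_j\partial_j\xi_M(u)$ term, the remaining $\vec b$-advection terms (e.g.\ $-u_i\partial_j\xi_M(u)\vec b_i\cdot\nabla u_j$), the reaction term $-R_i(v)\xi_M(u)\frac{u_i}{v_i}$, and the $\Gamma_{In}$/$\Gamma_{Out}$ boundary terms carrying $\frac{u_i}{v_i}\xi_M(u)$. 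Finally, adding the three contributions, splitting $\sum_i\int_0^T\int_\Omega\vec b_i\cdot\nabla u_i$ from the entropy estimate as $\sum_i\int_0^T\int_\Omega(1-\xi_M(u))\vec b_i\cdot\nabla u_i+\sum_i\int_0^T\int_\Omega\xi_M(u)\vec b_i\cdot\nabla u_i$ and cancelling the second piece (and likewise the spurious $\pm\xi_M(u)u_i\vec b_i\cdot\frac{\nabla v_i}{v_i}$ terms) against the contributions of the $v$-equation, and matching terms one by one, I obtain the stated inequality; the sign ``$\le$'' (rather than ``$=$'') is inherited solely from Proposition~\ref{EntropyDissipation}.

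The step I expect to be the main obstacle is twofold. First, the two density arguments -- passing from $C^\infty$ to Lipschitz test functions in \eqref{RenormalizedFormulation} and to $L^\infty\cap L^2_tH^1_x$ test functions in \eqref{WeakFormulation} -- require a careful, term-by-term verification of continuity under the relevant approximation; this rests entirely on $\sqrt{u_j}\in L^2_{loc}(I;H^1(\Omega))$, on the cutoff $\xi_M$ confining all dangerous products (of $u$ with its gradient, of reaction terms with logarithms) to the region $\{\sum_i u_i\le M^K\}$ where $u$ is bounded and $\nabla u_j\in L^2$, and on interpolation-trace estimates for the boundary integrals (the $\Gamma_{In}$ integral in \eqref{RenormalizedFormulation} may, as in the proof of Proposition~\ref{EntropyDissipation}, only be controllable from one side, so Fatou's lemma and the sign $g_i\ge0$ enter). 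Second, the bookkeeping: the two computations generate many individual terms that must be matched against the dozen-or-so groups in the statement, and keeping track of all signs -- in particular that the two non-symmetric cross terms arise exactly once each (one from the $u$-renormalization, one from the $v$-equation) and that the advection integral splits and partially cancels as described -- is where most of the effort lies.
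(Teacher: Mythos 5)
Your proposal is correct and follows essentially the same route as the paper: the same decomposition of $E_M[u|v]$ into $E[u]$, the cutoff cross term, and $\int\sum_i v_i$, the entropy dissipation estimate of Proposition~\ref{EntropyDissipation} for the first piece, the renormalization $\xi(w)=\xi_M(w)w_\ell$ with test function $\pm(\log v_\ell+\mu_\ell)$ for the second, and the weak formulation of $v$ tested (after integrating by parts in time) against $\xi_M(u)\frac{u_\ell}{v_\ell}$ and $1$ for the third -- the paper merely combines your two tests into the single test function $\frac{u_i}{v_i}\xi_M(u)-1$. The density extensions to Lipschitz, respectively $L^2_tH^1_x$, test functions and the cancellations of the $\partial_t v_i$ and $\xi_M(u)u_i\vec b_i\cdot\nabla v_i/v_i$ terms are exactly as in the paper.
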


\begin{proof}
To show the estimate for the evolution of the adjusted relative entropy $E_M[u|v]$, let us rewrite
\begin{align*}
E_M[u|v]
=&
\int_\Omega \sum_{i=1}^S \Big( u_i (\log u_i+\mu_i-1) - \xi_M(u) u_i (\log v_i+\mu_i) + v_i \Big) \,dx
\\
=&E[u]
-\sum_{i=1}^S \int_\Omega \xi_M(u) u_i (\log v_i+\mu_i) \,dx
+\sum_{i=1}^S \int_\Omega v_i \,dx.
\end{align*}
To estimate the evolution of the entropy $E[u]$, we shall use the entropy dissipation estimate from Proposition~\ref{EntropyDissipation}. Therefore it only remains to evaluate the evolution of the two remaining integrals. Choosing in the definition of renormalized solutions \eqref{RenormalizedFormulation} the renormalization $\xi(w):=w_i \xi_M(w)$ and the test function $\psi:=\log v_i+\mu_i$ (note that $\psi$ is an admissible test function due to our regularity assumptions on $v$; by approximation, one may use arbitrary Lipschitz test functions in \eqref{RenormalizedFormulation}), we infer for almost every $T\in [0,T_{max})$
\begin{align*}
&\int_\Omega \xi_M(u) u_i (\log v_i+\mu_i) ~dx \Bigg|_0^T
\\&
-\int_0^T \int_\Omega \frac{u_i}{v_i} \xi_M(u) \frac{d}{dt}v_i ~dx ~dt
\\&
=
-\sum_{j=1}^S \int_0^T \int_\Omega \partial_j \xi_M(u) (\log v_i+\mu_i)
\big(A_i\nabla u_i-u_i \vec b_i \big) \cdot \nabla u_j ~dx ~dt
\\&~~~
-\sum_{j,k=1}^S \int_0^T \int_\Omega u_i \partial_j\partial_k \xi_M(u) (\log v_i+\mu_i)
\big(A_j\nabla u_j-u_j \vec b_j \big) \cdot \nabla u_k ~dx ~dt
\\&~~~
-\sum_{j=1}^S \int_0^T \int_\Omega \partial_j \xi_M(u) (\log v_i+\mu_i)
\big(A_j\nabla u_j-u_j \vec b_j \big) \cdot \nabla u_i ~dx ~dt
\\&~~~
-\int_0^T \int_\Omega \xi_M(u)
\big(A_i\nabla u_i-u_i\vec{b_i}\big) \cdot \frac{\nabla v_i}{v_i} ~dx ~dt
\\&~~~
-\sum_{j=1}^S \int_0^T \int_\Omega u_i \partial_j \xi_M(u)
\big(A_j\nabla u_j-u_j\vec{b_j}\big) \cdot \frac{\nabla v_i}{v_i} ~dx ~dt
\\&~~~
+\int_0^T \int_\Omega \xi_M(u) R_i(u) (\log v_i+\mu_i) ~dx~dt
\\&~~~
+\sum_{j=1}^S \int_0^T \int_\Omega u_i \partial_j \xi_M(u) R_j(u) (\log v_i+\mu_i) ~dx~dt
\\&~~~
+\int_0^T \int_{\Gamma_{In}} g_i (\log v_i+\mu_i) \xi_M(u)
~d\mathcal{H}^{d-1} ~dt
\\&~~~
+\sum_{j=1}^S \int_0^T \int_{\Gamma_{In}} g_j (\log v_i+\mu_i) u_i \partial_j \xi_M(u)
~d\mathcal{H}^{d-1} ~dt
\\&~~~
-\int_0^T \int_{\Gamma_{Out}} \vec n \cdot \vec b_i ~u_i (\log v_i+\mu_i) \xi_M(u) ~d\mathcal{H}^{d-1} ~dt
\\&~~~
-\sum_{j=1}^S \int_0^T \int_{\Gamma_{Out}} \vec n \cdot \vec b_j ~u_j (\log v_i+\mu_i) u_i \partial_j \xi_M(u) ~d\mathcal{H}^{d-1} ~dt.
\end{align*}
We now integrate by parts with respect to $t$ in the left-hand side of the weak formulation \eqref{WeakFormulation} for our strong solution $v$; note that this is possible by the assumed regularity of $v$. Testing the resulting equation with $\frac{u_i}{v_i} \xi_M(u)-1$ (which is an admissible test function as it belongs to $L^2([0,T_{max});H^1(\Omega))$; by approximation, one sees that one may indeed use arbitrary test functions from the class $L^2([0,T_{max});H^1(\Omega))$ in the equation for $v$), we deduce
\begin{align*}
&\int_0^T \int_\Omega \frac{u_i}{v_i} \xi_M(u) \frac{d}{dt}v_i ~dx ~dt
-\int_\Omega v_i ~dx \Bigg|_0^T
\\&
=
-\int_0^T \int_\Omega \xi_M(u) \frac{(A_i \nabla v_i - v_i \vec{b_i}) \cdot \nabla u_i}{v_i} ~dx ~dt
\\&~~~
-\sum_{j=1}^S \int_0^T \int_\Omega u_i \partial_j \xi_M(u) \frac{(A_i \nabla v_i - v_i\vec{b_i}) \cdot \nabla u_j}{v_i} ~dx ~dt
\\&~~~
+\int_0^T \int_\Omega \xi_M(u) u_i \frac{(A_i \nabla v_i - v_i \vec{b_i}) \cdot \nabla v_i}{|v_i|^2} ~dx ~dt
\\&~~~
+\int_0^T \int_\Omega \Big(\frac{u_i}{v_i} \xi_M(u)-1\Big) R_i(v) ~dx ~dt
\\&~~~
+\int_0^T \int_{\Gamma_{In}} g_i \Big(\frac{u_i}{v_i} \xi_M(u)-1\Big) 
~d\mathcal{H}^{d-1} ~dt
\\&~~~
-\int_0^T \int_{\Gamma_{Out}} \vec n \cdot \vec b_i ~v_i \Big(\frac{u_i}{v_i} \xi_M(u)-1\Big)  ~d\mathcal{H}^{d-1} ~dt.
\end{align*}
Taking the sum with respect to $i$ in the previous two equations and subtracting the resulting equations from the entropy dissipation inequality derived in Proposition~\ref{EntropyDissipation}, we obtain the desired formula by simply reordering terms and noting that the terms of the form
\begin{align*}
\int_0^T \int_\Omega \frac{u_i}{v_i} \xi_M(u) \frac{d}{dt}v_i \,dx\,dt
\end{align*}
and
\begin{align*}
\int_0^T \int_\Omega \xi_M(u) u_i \frac{\vec{b_i} \cdot\nabla v_i}{v_i} \,dx\,dt
\end{align*}
cancel since they both appear once with a positive and once with a negative sign.
\end{proof}

\subsection{Proof of the weak-strong uniqueness property of renormalized solutions}
\label{WeakStrongUniquenessProof}

\begin{proof}[Proof of Theorem~\ref{Theorem}]
We are going to prove the weak-strong uniqueness principle by post-processing the adjusted relative entropy inequality of Proposition~\ref{RelativeEntropyInequality}, thereby reducing the problem to an application of the Gronwall lemma. To this aim, let us rewrite the estimate in Proposition~\ref{RelativeEntropyInequality} as
\begin{align}
\label{RelativeEntropyShort}
E_M[u|v] \Big|_0^T &\leq I+II+III+IV+V+VI+VII+VIII
\\&~~~~
\nonumber
+IX+X+XI+XII+XIII+XIV+XV.
\end{align}
We shall show that for $M$ and $K$ large enough (depending on $\sup_{i,x,t} v_i(x,t)$, $\sup_{i,x,t} \frac{1}{v_i(x,t)}$, $\sup_{i,x,t} |\nabla v_i(x,t)|$, and the data of the problem), an inequality of the form
\begin{align}
\label{RelativeEntropyGronwall}
E_M[u|v]\bigg|_0^T
\leq C(M,K,\mu_i,\inf v_i,\sup v_i,\sup |\nabla v_i|,R_i) \int_0^T E_M[u|v] \,dt
\end{align}
for almost every $T\in [0,T_{max})$ may be derived. By the coinciding initial data of $u$ and $v$ (which entails for the initial relative entropy $E_M[u|v](0)=0$), the Gronwall lemma therefore implies the desired conclusion $u=v$ almost everywhere for almost every $t\leq T_{max}$.

It only remains to establish \eqref{RelativeEntropyGronwall} by estimating the terms on the right-hand side of \eqref{RelativeEntropyShort} one by one. Regarding the dissipation term $I$, we have by (A2) and (A3) (using also that $\xi_M(u)=0$ for $\sum_{i=1}^S u_i\geq M^K$ and that $\xi_M(u)=1$ for $\sum_{i=1}^S u_i\leq M$)
\begin{align*}
I&=\sum_{i=1}^S \int_0^T \int_\Omega - u_i A_i \frac{\nabla u_i}{u_i} \cdot \frac{\nabla u_i}{u_i} - u_i \xi_M(u) A_i \frac{\nabla v_i}{v_i} \cdot \frac{\nabla v_i}{v_i}
\\&~~~~~~~~~~~~~~~~~~~
+u_i \xi_M(u) A_i \frac{\nabla u_i}{u_i}\cdot \frac{\nabla v_i}{v_i}
+u_i \xi_M(u) A_i \frac{\nabla v_i}{v_i}\cdot \frac{\nabla u_i}{u_i} \,dx\,dt
\\&
\leq - \sum_{i=1}^S \int_0^T \int_\Omega u_i A_i \Big(\frac{\nabla u_i}{u_i}-\frac{\nabla v_i}{v_i}\Big)
\cdot \Big(\frac{\nabla u_i}{u_i}-\frac{\nabla v_i}{v_i}\Big) \chi_{\{\sum_{i=1}^S u_i\leq M\}} \,dx\,dt
\\&~~~
-c\sum_{i=1}^S \int_0^T \int_\Omega|\nabla \sqrt{u_i}|^2 \chi_{\{\sum_{i=1}^S u_i>M\}} \,dx\,dt
\\&~~~
+C\sum_{i=1}^S \int_0^T \int_\Omega u_i \frac{|\nabla v_i|^2}{|v_i|^2} \chi_{\{M< \sum_{i=1}^S u_i\leq M^K\}} \,dx\,dt,
\end{align*}
where as usual $c,C>0$ denote constants and where we have used Young's inequality to estimate terms like $u_i \xi_M(u) A_i \frac{\nabla u_i}{u_i}\cdot \frac{\nabla v_i}{v_i}$. This bound entails by \eqref{EstimateLargerThanM}
\begin{align}
I&\leq
\nonumber
-c \sum_{i=1}^S \int_0^T \int_\Omega u_i \left|\frac{\nabla u_i}{u_i}-\frac{\nabla v_i}{v_i}\right|^2 \chi_{\{\sum_{i=1}^S u_i\leq M\}} \,dx\,dt
\\&~~~
\label{Dissipation}
-c\sum_{i=1}^S \int_0^T \int_\Omega|\nabla \sqrt{u_i}|^2 \chi_{\{\sum_{i=1}^S u_i >M \}} \,dx\,dt
\\&~~~
\nonumber
+C\sum_{i=1}^S \sup_{x,t} \frac{|\nabla v_i|^2}{|v_i|^2} \int_0^T E_M[u|v] \,dt.
\end{align}
Before we estimate the remaining terms, let us collect some estimates on the derivatives of $\xi_M$. By the bounds $|\partial_j \xi_M(u)|\leq \frac{C}{K\sum_{i=1}^S u_i}$ and $|\partial_j \partial_k \xi_M(u)|\leq \frac{C}{K|\sum_{i=1}^S u_i|^2}$, we conclude that
\begin{align}
\label{EstimateDxi}
|\partial_j \xi_M(u)\sqrt{u_i}\sqrt{u_j}| &\leq \frac{C}{K},
\\
\label{EstimateD2xi}
|u_i \partial_j \partial_k \xi_M(u) \sqrt{u_j}\sqrt{u_k}| &\leq \frac{C}{K}.
\end{align}
For the term $II$, we have by \eqref{EstimateDxi} and the fact that $\partial_j \xi_M(u)=0$ for $\sum_{i=1}^S u_i\leq M$
\begin{align*}
II&=
\sum_{i,j=1}^S \int_0^T \int_\Omega \partial_j \xi_M(u) (\log v_i+\mu_i) (A_i\nabla u_i \cdot \nabla u_j + A_j \nabla u_j \cdot \nabla u_i) \,dx \,dt
\\&
\leq \frac{C}{K} \sup_{i,x,t} |\log v_i+\mu_i| \sum_{i=1}^S \int_0^T \int_\Omega |\nabla \sqrt{u_i}|^2 \chi_{\{\sum_{i=1}^S u_i >M \}} \,dx \,dt.
\end{align*}
It is now of crucial importance to observe that for $K$ large enough, this upper bound on $II$ may be absorbed in the dissipation term \eqref{Dissipation}.

Concerning the term $III$, we deduce using \eqref{EstimateD2xi} the fact that $\partial_j \partial_k \xi_M(u)=0$ for $\sum_{i=1}^S u_i\leq M$
\begin{align*}
III&=
\sum_{i,j,k=1}^S \int_0^T \int_\Omega u_i \partial_j \partial_k \xi_M(u) (\log v_i+\mu_i) A_j\nabla u_j \cdot \nabla u_k \,dx \,dt
\\
&\leq
\frac{C}{K} \sup_{i,x,t} |\log v_i+\mu_i| \sum_{i=1}^S \int_0^T \int_\Omega |\nabla \sqrt{u_i}|^2 \chi_{\{\sum_{i=1}^S u_i >M \}} \,dx \,dt.
\end{align*}
For $K$ large enough, this upper bound on $III$ may again be absorbed in the dissipation term \eqref{Dissipation}.

Similarly, we have using \eqref{EstimateDxi} and the fact that $\partial_j \xi_M(u)=0$ for $\sum_{i=1}^S u_i \leq M$
\begin{align*}
IV&=
\sum_{i,j=1}^S \int_0^T \int_\Omega u_i u_j \partial_j \xi_M(u) \bigg(A_j \frac{\nabla u_j}{u_j} \cdot \frac{\nabla v_i}{v_i} + A_i \frac{\nabla v_i}{v_i} \cdot \frac{\nabla u_j}{u_j} \bigg) \,dx \,dt
\\
&\leq
\frac{C}{K} \sum_{i=1}^S \int_0^T \int_\Omega |\nabla \sqrt{u_i}|^2 \chi_{\{\sum_{i=1}^S u_i >M \}} \,dx \,dt
\\&~~~
+\frac{C}{K} \sum_{i,j=1}^S \int_0^T \int_\Omega u_j \left|\frac{\nabla v_i}{v_i}\right|^2 \chi_{\{\sum_{i=1}^S u_i >M \}} \,dx \,dt
\\
&\leq
\frac{C}{K} \sum_{i=1}^S \int_0^T \int_\Omega |\nabla \sqrt{u_i}|^2 \chi_{\{\sum_{i=1}^S u_i >M \}} \,dx \,dt
\\&~~~
+\frac{C}{K} \sup_{i,x,t} \left|\frac{\nabla v_i}{v_i}\right|^2 \int_0^T E_M[u|v] \,dt,
\end{align*}
where we have used \eqref{EstimateLargerThanM} in the last step. Note that for $K$ large enough the first term on the right-hand side may be absorbed in \eqref{Dissipation}, while the second term on the right-hand side is of the form of the right-hand side in \eqref{RelativeEntropyGronwall}.

By $\xi_M(u)=1$ for $\sum_{i=1}^S u_i \leq M$, the property $0\leq \xi_M(u)\leq 1$, boundedness of $\vec{b_i}$, Young's inequality, and the estimate \eqref{EstimateLargerThanM}, we deduce
\begin{align*}
V&=
\sum_{i=1}^S \int_0^T \int_\Omega (1-\xi_M(u)) \vec{b_i} \cdot \nabla u_i \,dx\,dt
\\&
\leq \frac{C}{K} \sum_{i=1}^S \int_0^T \int_\Omega |\nabla \sqrt{u_i}|^2  \chi_{\{\sum_{i=1}^S u_i >M \}} \,dx\,dt
\\&~~~
+CK \sum_{i=1}^S \int_0^T \int_\Omega |\sqrt{u_i}|^2  \chi_{\{\sum_{i=1}^S u_i >M \}} \,dx\,dt
\\&
\leq \frac{C}{K} \sum_{i=1}^S \int_0^T \int_\Omega |\nabla \sqrt{u_i}|^2  \chi_{\{\sum_{i=1}^S u_i >M \}} \,dx\,dt
\\&~~~
+CK \int_0^T E_M[u|v] \,dt.
\end{align*}
Again, for $K$ large enough the first term on the right-hand side may be absorbed in \eqref{Dissipation}, while the second term on the right-hand side is of the form of the right-hand side in \eqref{RelativeEntropyGronwall}.

We have by boundedness of $\vec{b_i}$, the estimate \eqref{EstimateDxi}, the fact that $\partial_j \xi_M(u)=0$ for $\sum_{i=1}^S u_i\leq M$, Young's inequality, and the bound \eqref{EstimateLargerThanM}
\begin{align*}
VI&=
-\sum_{i,j=1}^S \int_0^T \int_\Omega \partial_j \xi_M(u) (\log v_i+\mu_i) \big(u_i \vec b_i\cdot \nabla u_j + u_j \vec b_j \cdot \nabla u_i\big) \,dx \,dt
\\&
\leq \frac{C}{K} \sup_{i,x,t} |\log v_i+\mu_i| \sum_{i=1}^S \int_0^T \int_\Omega |\nabla \sqrt{u_i}|^2  \chi_{\{\sum_{i=1}^S u_i >M \}} \,dx\,dt
\\&~~~
+\frac{C}{K}  \sup_{i,x,t} |\log v_i+\mu_i| \int_0^T E_M[u|v] \,dt.
\end{align*}
Again, for $K$ large enough the first term on the r.\,h.\,s.\ may be absorbed in \eqref{Dissipation} and the second term is of the form of the right-hand side in \eqref{RelativeEntropyGronwall}.

Similarly, by boundedness of $\vec{b_i}$, the estimate \eqref{EstimateD2xi}, the fact that $\partial_j \partial_k \xi_M(u)=0$ for $\sum_{i=1}^S u_i\leq M$, the bound \eqref{EstimateLargerThanM}, and Young's inequality, we get
\begin{align*}
VII&=
-\sum_{i,j,k=1}^S \int_0^T \int_\Omega u_i \partial_j\partial_k \xi_M(u) (\log v_i+\mu_i) u_j \vec b_j \cdot \nabla u_k \,dx \,dt
\\&
\leq \frac{C}{K} \sup_{i,x,t} |\log v_i+\mu_i| \sum_{i=1}^S \int_0^T \int_\Omega |\nabla \sqrt{u_i}|^2  \chi_{\{\sum_{i=1}^S u_i >M \}} \,dx\,dt
\\&~~~
+\frac{C}{K}  \sup_{i,x,t} |\log v_i+\mu_i| \int_0^T E_M[u|v] \,dt.
\end{align*}
Once more, for $K$ large enough the first term on the r.\,h.\,s.\ may be absorbed in \eqref{Dissipation} and the second term is of the form of the right-hand side in \eqref{RelativeEntropyGronwall}.

We have by boundedness of $\vec{b_i}$, the properties of $\xi_M$, and \eqref{EstimateLargerThanM}
\begin{align*}
VIII&=
-\sum_{i,j=1}^S \int_0^T \int_\Omega u_i \partial_j \xi_M(u) u_j \vec{b_j} \cdot \frac{\nabla v_i}{v_i} \,dx\,dt
\\&
\leq \frac{C}{K} \sup_{i,x,t} \left|\frac{\nabla v_i}{v_i}\right| \sum_{i=1}^S \int_0^T \int_\Omega u_i \chi_{\{\sum_{i=1}^S u_i >M \}} \,dx\,dt
\\&
\leq \frac{C}{K} \sup_{i,x,t} \left|\frac{\nabla v_i}{v_i}\right| \int_0^T E_M[u|v] \,dt,
\end{align*}
which is of the form of the right-hand side of \eqref{RelativeEntropyGronwall}.

The term $IX$ may be estimated similarly to the term $VI$, resulting in the bound
\begin{align*}
IX&=
-\sum_{i,j=1}^S \int_0^T \int_\Omega u_i \partial_j \xi_M(u) \vec{b_i} \cdot \nabla u_j \,dx\,dt
\\&
\leq \frac{C}{K} \sum_{i=1}^S \int_0^T \int_\Omega |\nabla \sqrt{u_i}|^2  \chi_{\{\sum_{i=1}^S u_i >M \}} \,dx\,dt
\\&~~~
+\frac{C}{K} \int_0^T E_M[u|v] \,dt.
\end{align*}

A crucial part of the argument is the estimate for the contribution of the reaction term.
To estimate
\begin{align*}
X&=
\int_0^T \int_\Omega \sum_{i=1}^S \bigg(R_i(u)\big(\log u_i+\mu_i-\xi_M(u) (\log v_i+\mu_i)\big) - R_i(v)\Big(\xi_M(u)\frac{u_i}{v_i}-1 \Big)\bigg) \,dx\,dt,
\end{align*}
we split the integral into an integral over the set $\{\sum_{i=1}^S u_i\leq M\}$ and an integral over the complement $\{\sum_{i=1}^S u_i>M\}$. On the latter set, we make use of the entropy dissipation property \eqref{EntropyCondition}, that is
\begin{align*}
\sum_{i=1}^S R_i(u)(\log u_i+\mu_i) \leq 0.
\end{align*}
On the set $\{\sum_{i=1}^S u_i\leq M\}$ we have $\xi_M(u)=1$ and thus
\begin{align*}
&R_i(u)\big(\log u_i+\mu_i-\xi_M(u) (\log v_i+\mu_i)\big) - R_i(v)\left(\xi_M(u)\frac{u_i}{v_i}-1 \right)
\\&
=R_i(u) \left(\log \frac{u_i}{v_i}-\frac{u_i}{v_i}+1\right)
+(R_i(u)-R_i(v))\left(\frac{u_i}{v_i}-1\right).
\end{align*}
We therefore have
\begin{align*}
X&\leq \sum_{i=1}^S \int_0^T \int_\Omega R_i(u) \left(\log \frac{u_i}{v_i}-\frac{u_i}{v_i}+1\right) \chi_{\{\sum_{i=1}^S u_i \leq M \}} \,dx\,dt
\\&~~~
+\sum_{i=1}^S \int_0^T \int_\Omega (R_i(u)-R_i(v))\left(\frac{u_i}{v_i}-1\right) \chi_{\{\sum_{i=1}^S u_i \leq M \}} \,dx\,dt
\\&~~~
+\sum_{i=1}^S \int_0^T \int_\Omega \Big(|R_i(u)\xi_M(u)| |\log v_i+\mu_i| + |R_i(v)|\Big(\frac{u_i}{v_i}+1\Big)\Big) \chi_{\{\sum_{i=1}^S u_i > M \}} \,dx\,dt.
\end{align*}
We now use the estimate $\sup_{w\in (\mathbb{R}_0^+)^S} |R_i(w)\xi_M(w)|\leq C(M,K,R_i)$ (which holds due to $\xi_M(u)=0$ for $\sum_{j=1}^S u_j\geq M^K$) as well as the bound $|R_i(u)-R_i(v)|\leq C(M,R_i)\sum_{j=1}^S |u_j-v_j|$ in the case $\sum_{j=1}^S u_j \leq M$ and $\sum_{j=1}^S v_j \leq M$ (which is a consequence of the Lipschitz continuity of $R_i$ on bounded sets, see (A5)). The previous estimate then turns into the bound
\begin{align*}
X&\leq \sum_{i=1}^S \int_0^T \int_\Omega R_i(u) \left(\log \frac{u_i}{v_i}-\frac{u_i}{v_i}+1\right) \chi_{\{\sum_{i=1}^S u_i \leq M \}} \,dx\,dt
\\&~~~
+C(M,R_i) \sup_{i,x,t} \frac{1}{v_i} \int_0^T \int_\Omega \sum_{i=1}^S |u_i-v_i|^2 \chi_{\{\sum_{i=1}^S u_i \leq M \}} \,dx\,dt
\\&~~~
+C(M,K,R_i) \sup_{i,x,t} \left(|\log v_i+\mu_i|+|R_i(v)|+\frac{|R_i(v)|}{v_i} \right)
\\&~~~~~~~~~~~~~~~~~~~~~~~~~
\times
\int_0^T \int_\Omega \Big(1+\sum_{i=1}^S u_i\Big) \chi_{\{\sum_{i=1}^S u_i > M \}} \,dx \,dt.
\end{align*}
Elementary calculus yields the estimate $0\geq \log x-x+1 \geq -|x-1|^2$ for $x\geq 1$ and $0\geq \log x-x+1\geq -\frac{|x-1|^2}{x}$ for $x\leq 1$. By the Lipschitz continuity of $R_i$ on bounded sets (see (A5)) and the fact that $R_i(u)\geq 0$ in case $u_i=0$ (see (A6)), we infer $R_i(u)\geq -C(M,R_i)u_i$ for $\sum_{i=1}^S u_i\leq M$. This implies for $\sum_{i=1}^S u_i\leq M$
\begin{align*}
R_i(u) \left(\log \frac{u_i}{v_i}-\frac{u_i}{v_i}+1\right)
\leq C(M,R_i)u_i \left(1+\frac{v_i}{u_i}\right) \left|\frac{u_i-v_i}{v_i}\right|^2.
\end{align*}
In total, we therefore get using also \eqref{EstimateLargerThanM}
\begin{align*}
X&\leq C(M,R_i) \sup_{i,x,t} \left(\frac{1+v_i}{|v_i|^2}\right) \int_0^T \int_\Omega \sum_{i=1}^S |u_i-v_i|^2 \chi_{\{\sum_{i=1}^S u_i \leq M \}} \,dx\,dt
\\&~~~
+C(M,R_i) \sup_{i,x,t} \frac{1}{v_i} \int_0^T \int_\Omega \sum_{i=1}^S |u_i-v_i|^2 \chi_{\{\sum_{i=1}^S u_i \leq M \}} \,dx\,dt
\\&~~~
+C(M,K,R_i) \sup_{i,x,t} \left(|\log v_i+\mu_i|+|R_i(v)|+\frac{|R_i(v)|}{v_i} \right)
\int_0^T E_M[u|v] \,dt.
\end{align*}
Now note that the first two integrals may be estimated in terms of $C(M)\int_0^T E_M[u|v] \,dt$ due to inequality \eqref{EstimateSmallerThanM}. Therefore, the term $X$ is estimated by a term of the form of the right-hand side of \eqref{RelativeEntropyGronwall}.

To estimate the term $XI$, we have by the bound $|\partial_j \xi_M(u)|\leq \frac{C}{K\sum_{i=1}^S u_i}$, the fact that $\partial_j \xi_M(u)=0$ whenever $\sum_{i=1}^S u_i\leq M$ or $\sum_{i=1}^S u_i\geq M^K$, and \eqref{EstimateLargerThanM}
\begin{align*}
XI&=
-\sum_{i,j=1}^S \int_0^T \int_\Omega \partial_j \xi_M(u) u_i (\log v_i+\mu_i) \, R_j(u) \,dx\,dt
\\&
\leq \frac{C}{K}\sup_{i,x,t} |\log v_i+\mu_i| \sup_{\sum_{j=1}^S w_j\leq M^K} |R(w)| \int_0^T \int_\Omega \chi_{\{\sum_{i=1}^S u_i >M \}} \,dx\,dt
\\&
\leq C(M,K,R_i) \sup_{i,x,t} |\log v_i+\mu_i| \int_0^T E_M[u|v] \,dt.
\end{align*}
It remains to deal with the boundary terms. Rearranging the terms in $XII$, we obtain
\begin{align*}
XII&=
\sum_{i=1}^S \int_0^T \int_{\Gamma_{In}} g_i \big(\log u_i+\mu_i-\xi_M(u)(\log v_i+\mu_i)\big) - g_i \Big(\frac{u_i}{v_i} \xi_M(u)-1\Big) \,d{\mathcal{H}}^{d-1} \,dt
\\&
=\sum_{i=1}^S \int_0^T \int_{\Gamma_{In}} g_i \left(\log \frac{u_i}{v_i}-\frac{u_i}{v_i}+1\right) \,d{\mathcal{H}}^{d-1} \,dt
\\&~~~~
+\sum_{i=1}^S \int_0^T \int_{\Gamma_{In}} g_i (1-\xi_M(u))\left(\log v_i+\mu_i+\frac{u_i}{v_i} \right) \,d{\mathcal{H}}^{d-1} \,dt.
\end{align*}
Making use of the fact that $g_i\geq 0$ (see (B2)) and the fact that $\log x-x+1\leq 0$ for all $x\geq 0$, we see that the first integral on the right-hand side is nonpositive. Estimating the second integral using the boundedness of $g_i$, the fact that $\xi_M(u)=1$ for $\sum_{i=1}^S u_i\leq M$, and the bound $\sum_{i=1}^S v_i \leq \frac{M}{4}$ (which is true for $M$ large enough due to our assumption $\sup_{i,x,t}v_i<\infty$), we infer by $|\sqrt{u_i}-\sqrt{v_i}|^2 \geq \frac{1}{2} u_i - v_i$
\begin{align*}
XII&
\leq
C \sup_{i,x,t} \left(|\log v_i+\mu_i|+\frac{1}{v_i}\right)
\int_0^T \int_{\Gamma_{In}} \Big(1+\sum_{i=1}^S u_i\Big) \chi_{\{\sum_{i=1}^S u_i >M\}} \,d{\mathcal{H}}^{d-1} \,dt
\\&
\leq
C \sup_{i,x,t} \left(|\log v_i+\mu_i|+\frac{1}{v_i}\right)
\int_0^T \int_{\Gamma_{In}} \sum_{i=1}^S |\sqrt{u_i}-\sqrt{v_i}|^2 \,d{\mathcal{H}}^{d-1} \,dt.
\end{align*}
Estimating the boundary integral by the formula \eqref{BoundaryTraceEstimate}, we see that the first two terms resulting from the application of \eqref{BoundaryTraceEstimate} may be absorbed in the dissipation terms of \eqref{Dissipation} by choosing $\varepsilon>0$ small enough. The third resulting term is of the form of the right-hand side in \eqref{RelativeEntropyGronwall}.

Concerning the terms $XIII$ and $XV$, we estimate using the bound $|\partial_j \xi_M(u)|\leq \frac{C}{K\sum_{i=1}^S u_i}$, the fact that $\partial_j \xi_M(u)=0$ for $\sum_{i=1}^S u_i \leq M$, and the boundedness of $\vec{b_j}$ and $g_j$, we infer
\begin{align*}
&XIII+XV
\\
&=
-\sum_{i,j=1}^S \int_0^T \int_{\Gamma_{In}} g_j (\log v_i+\mu_i) u_i \partial_j \xi_M(u) \,d{\mathcal{H}}^{d-1} \,dt
\\&~~~~
+\sum_{i,j=1}^S \int_0^T \int_{\Gamma_{Out}} \vec n \cdot \vec b_j \, u_j (\log v_i+\mu_i) u_i \partial_j \xi_M(u) ~d\mathcal{H}^{d-1} ~dt
\\
&\leq
\frac{C}{K} \sup_{i,x,t} |\log v_i+\mu_i| \int_0^T \int_{\Gamma_{In}} \Big(1+\sum_{i=1}^S  u_i\Big) \chi_{\{\sum_{i=1}^S u_i >M\}} \,d{\mathcal{H}}^{d-1}\,dt
\\
&\leq
\frac{C}{K} \sup_{i,x,t} |\log v_i+\mu_i| \int_0^T \int_{\Gamma_{In}} \sum_{i=1}^S |\sqrt{u_i}-\sqrt{v_i}|^2 \,d{\mathcal{H}}^{d-1}\,dt,
\end{align*}
where in the last step we have assumed that $M$ is so large that $\sum_{i=1}^S v_i \leq \frac{M}{2S}$. As before, we now apply \eqref{BoundaryTraceEstimate} to bound the boundary integral, which after our usual absorption argument leaves only a Gronwall term.

Estimating the remaining boundary term $XIV$, we get
\begin{align*}
XIV&=
-\sum_{i=1}^S \int_0^T \int_{\Gamma_{Out}} \vec{n}\cdot \vec{b_i}\, u_i \big(\log u_i+\mu_i-\xi_M(u)(\log v_i+\mu_i)\big)
\\&~~~~~~~~~~~~~~~~~~~~~~~~~~~~~
-\vec n \cdot  \vec b_i \,v_i \Big(\frac{u_i}{v_i} \xi_M(u)-1\Big)
\,d{\mathcal{H}}^{d-1} \,dt
\\&
=
-\sum_{i=1}^S \int_0^T \int_{\Gamma_{Out}} \vec{n}\cdot \vec{b_i} \left(u_i \log \frac{u_i}{v_i} - u_i + v_i\right) \,d{\mathcal{H}}^{d-1} \,dt
\\&~~~~
+\sum_{i=1}^S \int_0^T \int_{\Gamma_{Out}}  \vec{n}\cdot \vec{b_i} (1-\xi_M(u)) u_i (\log v_i+\mu_i+1)
\,d{\mathcal{H}}^{d-1} \,dt
\\&
\leq
C \sup_{i,x,t} |\log v_i+\mu_i+1| \int_0^T \int_{\Gamma_{Out}} \sum_{i=1}^S |\sqrt{u_i}-\sqrt{v_i}|^2
\,d{\mathcal{H}}^{d-1} \,dt.
\end{align*}
Here, in the last step we have used the nonnegativity of the function $a \log a - a \log b -a+b$ for $a,b\geq 0$ as well as the fact that $\vec{n}\cdot \vec{b_i}\geq 0$ on $\Gamma_{Out}$ (see assumption (B3)) to show nonnegativity of the first integral in the estimate; concerning the second integral, we have made use of the bound $M\geq 2S \sum_{i=1}^S v_i$ (which is valid for $M$ large enough) and the fact that $\xi_M(u)=1$ for $\sum_{i=1}^S u_i \leq M$. Again, the remaining boundary integral may be estimated by \eqref{BoundaryTraceEstimate} followed by an absorption argument and leaving behind only a Gronwall term.
\end{proof}

\section{Weak Solutions are Renormalized Solutions}

\begin{proof}[Proof of Theorem \ref{WeakImpliesRenormalized}]
The derivation of the renormalized formulation \eqref{RenormalizedFormulation} of our reaction-diffusion-advection equation from the weak formulation \eqref{WeakFormulation} involves the justification of the chain rule by an appropriate approximation argument.

For compactly supported test functions $\psi\in C^\infty_{cpt}(\Omega \times [0,\infty))$, the approximation argument is relatively straightforward: Let $\rho_\delta$ denote a family of standard symmetric mollifiers (with respect to space, that is $\rho_\delta:\mathbb{R}^d\rightarrow \mathbb{R}$). Given some smooth function $\xi:(\mathbb{R}_0^+)^S\rightarrow \mathbb{R}$ with compactly supported derivative $D\xi$, we consider the test functions
\begin{align*}
\psi_i:=\rho_\delta\ast (\psi \partial_i \xi(\rho_\delta \ast u)),
\end{align*}
where the convolutions all refer to space only. Note that for $\delta>0$ small enough, the function $\psi_i$ is well-defined and belongs to $C^\infty_{cpt}(\Omega\times [0,\infty))$. From the weak formulation \eqref{WeakFormulation} we infer the property $u\in W^{1,1}([0,T];(W^{1,\infty}(\Omega))')$ for all $T\geq 0$, which implies $\rho_\delta\ast u\in W^{1,1}([0,T];C^2(\Omega_\delta))$ with $\Omega_\delta:=\{x\in \Omega:\operatorname{dist}(x,\partial\Omega)>\delta\}$ (and therefore the same regularity for $\psi_i$ in $\Omega_{2\delta}$). Testing the weak formulation \eqref{WeakFormulation} with $\psi_i$, noting that the boundary terms vanish due to the compact support of $\psi_i$, and using general properties of convolutions as well as the symmetry of $\rho_\delta$, we deduce
\begin{align*}
&\int_\Omega (\rho_\delta \ast u_i(\cdot,T)) (\psi \partial_i \xi(\rho_\delta \ast u))(\cdot,T) ~dx
-\int_\Omega (\rho_\delta \ast (u_0)_i) \psi(\cdot,0) \partial_i \xi(\rho_\delta \ast u_0) ~dx
\\&
-\int_0^T \int_\Omega (\rho_\delta \ast u_i) \frac{d}{dt} (\psi \partial_i \xi(\rho_\delta \ast u)) ~dx ~dt
\\
=&-\int_0^T \int_\Omega (\rho_\delta \ast (A_i\nabla u_i)) \cdot \nabla (\psi \partial_i \xi(\rho_\delta \ast u)) ~dx ~dt
\\&
+\int_0^T \int_\Omega (\rho_\delta \ast (u_i \vec b_i)) \cdot \nabla (\psi \partial_i \xi(\rho_\delta \ast u)) ~dx ~dt
\\&
+\int_0^T \int_\Omega (\rho_\delta\ast R_i(u)) \psi \partial_i \xi(\rho_\delta \ast u) ~dx~dt.
\end{align*}
Integrating by parts with respect to time in the time integral on the left-hand side (note that our differentiability properties are now sufficient to justify that), taking the sum with respect to $i$, and employing the chain rule and product rule in several terms (note that again, the regularity is sufficient to justify these), we get
\begin{align*}
&\int_\Omega \xi(\rho_\delta \ast u) \psi ~dx \bigg|_0^T
-\int_0^T \int_\Omega \xi(\rho_\delta \ast u) \frac{d}{dt}\psi ~dx ~dt
\\&
=\int_0^T \int_\Omega \psi \frac{d}{dt} \xi(\rho_\delta \ast u) ~dx ~dt
\\&
=\int_0^T \int_\Omega \sum_{i=1}^S \psi \partial_i \xi(\rho_\delta \ast u) \frac{d}{dt} (\rho_\delta \ast u_i) ~dx ~dt
\\&
=-\sum_{i,j=1}^S \int_0^T \int_\Omega \psi \partial_i \partial_j \xi(\rho_\delta \ast u) (\rho_\delta \ast (A_i\nabla u_i)) \cdot \nabla (\rho_\delta \ast u_j) ~dx ~dt
\\&~~~
-\sum_{i=1}^S \int_0^T \int_\Omega \partial_i \xi(\rho_\delta \ast u) (\rho_\delta \ast (A_i\nabla u_i)) \cdot \nabla \psi ~dx ~dt
\\&~~~
+\sum_{i,j=1}^S \int_0^T \int_\Omega \psi \partial_i \partial_j \xi(\rho_\delta \ast u) (\rho_\delta \ast (u_i \vec b_i)) \cdot \nabla (\rho_\delta \ast u_j) ~dx ~dt
\\&~~~
+\sum_{i=1}^S \int_0^T \int_\Omega \partial_i \xi(\rho_\delta \ast u) (\rho_\delta \ast (u_i \vec b_i)) \cdot \nabla \psi ~dx ~dt
\\&~~~
+\int_0^T \int_\Omega (\rho_\delta\ast R_i(u)) \psi \partial_i \xi(\rho_\delta \ast u) ~dx~dt.
\end{align*}
In order to justify the renormalized formulation \eqref{RenormalizedFormulation} for such compactly supported test functions $\psi$, it only remains to pass to the limit $\delta\rightarrow 0$ in all terms on the right-hand side and the terms in the first line. For example, concerning the last term, we have the convergence $\rho_\delta\ast R_i(u)\rightarrow R_i(u)$ strongly in $L^1(\Omega\times [0,T])$ (as we have by the definition of weak solutions $R_i(u)\in L^1(\Omega \times [0,T])$) and the convergence $\partial_i \xi(\rho_\delta \ast u)\rightarrow \partial_i \xi(u)$ pointwise a.\,e.\ with a uniform $L^\infty$ bound, which together by Vitali's theorem is sufficient for the passage to the limit. For the passage to the limit in the second term on the right-hand side, we also use Vitali's theorem, now with the convergence $\rho_\delta \ast (A_i \nabla u_i)\rightarrow A_i \nabla u_i$ strongly in $L^1(\Omega\times [0,T])$ which holds due to $\sqrt{u_i}\in L^2([0,T];H^1(\Omega))$. The passage to the limit in the terms on the left-hand side and in the fourth term on the right-hand side is similarly accomplished. In the first and the third term on the right-hand side, one additionally needs estimates like $|\rho_\delta \ast (\sqrt{u_i}A_i \nabla \sqrt{u_i})|(x)\leq \sqrt{(\rho_\delta \ast u_i)(x)(\rho_\delta \ast  |A_i \nabla \sqrt{u_i}|^2)(x)}$ (which are a consequence of H\"older's inequality) to ensure appropriate equi-integrability of the integrands (note that the factor $\partial_i \partial_j \xi(\rho_\delta \ast u)$ vanishes whenever one of the $u_j$ becomes too large, thereby eliminating factors like $\sqrt{(\rho_\delta \ast u_i)(x)}$ from the integrability considerations).

In the case of test functions $\psi=\psi(x,t)$ that are nonzero also at the (spatial) boundary $\partial \Omega$, one first observes that (by a decomposition of unity argument) it is sufficient to justify the renormalized formulation for test functions supported in small coordinate patches around boundary points. One then performs a (bi-Lipschitz and volume-preserving) change of variables to straighten the boundary locally, noting that the structure of the equation \eqref{Equation} is stable under such a change of coordinates (while the $A_i$, $\vec{b_i}$ may change, their bounds are preserved up to constant factors). In the changed coordinates, one extends the solution to the other side of the boundary by reflection and proceeds by a similar mollification argument. As the test functions are now nonzero on (a small part of) the boundary, additionally the (desired) boundary terms in \eqref{RenormalizedFormulation} appear. For details of such an argument, see \cite[Proof of Lemma 4]{FischerReactDiffExistence}, where this argument is carried out in detail in another situation.
\end{proof}

\bibliographystyle{abbrv}
\bibliography{reactdiff}

\end{document}